\providecommand{\MR}{\relax\ifhmode\unskip\space\fi MR }
\providecommand{\href}[2]{#2}
\theoremstyle{plain}
\newtheorem{thm}{Theorem}[section]
\newtheorem{prop}[thm]{Proposition}
\newtheorem{lemma}[thm]{Lemma}
\newtheorem{corollary}[thm]{Corollary}
\theoremstyle{definition}
\newtheorem{defin}[thm]{Definition}
\newtheorem{exa}[thm]{Example}
\newcommand{\al}{\alpha}
\newcommand{\del}{\partial}
\newcommand{\norm}[1]{\left\vert \left\vert #1\right\vert\right\vert}
\newcommand{\abs}[1]{\left\vert#1\right\vert}
\newcommand{\set}[1]{\left\{#1\right\}}
\newcommand{\N}{\ensuremath{\mathbb{N}}}
\newcommand{\R}{\ensuremath{\mathbb{R}}}
\newcommand{\Z}{\ensuremath{\mathbb{Z}}}
\newcommand{\C}{\ensuremath{\mathbb{C}}}
\title{The Inverse Laplacian: Traces in Infinite Dimensions}
\author{Bryce Morrow}
\subjclass{35P05}
\keywords{Trace, Inverse Laplace Operator}
\begin{document}

\begin{abstract}
In this paper, we present a concise development of the well-studied theory of trace class operators on infinite dimensional (separable) Hilbert spaces suitable for an advanced undergraduate, as well as a construction of the inverse Laplacian on closed manifolds. With these developments acting as prerequisite, we present original trace computations involving the inverse Laplacian on the (flat) torus, generalizing computations done by R. Grady and O. Gwilliam within the context of topological quantum field theory.

\end{abstract}

\maketitle

\tableofcontents

\section{Finite-Dimensional Traces}

Let us start our discussion on infinite-dimensional traces with a familiar subject: traces in finite dimensions. Let $V$ be a vector space of dimension $n \in \N$ over $\C$. Also, let $P:V \to V$ be a linear map. Choose a basis $\set{e_i}_{i=1}^{n} \subset V$ for $V$ and let $[P]_{\set{e_i}} \in \C^{n \times n}$ denote the matrix determined by the operator $P$ under the basis $\set{e_i}_{i=1}^n$. We define the \textit{trace} of $P$ as 
$$\text{Tr}(P) := \text{Tr}\left([P]_{\set{e_i}}\right)$$
where $\text{Tr}\left([P]_{\set{e_i}}\right)$ denotes the trace of the matrix $[P]_{\set{e_i}}$ given as the sum of the diagonal entries.\\

\begin{prop}
$\text{Tr}(P)$ is well-defined.    
\end{prop}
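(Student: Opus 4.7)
The plan is to show that the number $\mathrm{Tr}([P]_{\{e_i\}})$ does not depend on the choice of basis $\{e_i\}$. Concretely, I will take a second basis $\{f_i\}_{i=1}^n \subset V$ and show that
\[
\mathrm{Tr}\!\left([P]_{\{e_i\}}\right) = \mathrm{Tr}\!\left([P]_{\{f_i\}}\right).
\]

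The first step is to recall the change-of-basis formula. Let $S \in \C^{n \times n}$ be the invertible matrix whose $j$th column records the coordinates of $f_j$ in the basis $\{e_i\}$. Then a standard linear algebra argument (expanding $Pf_j$ in both bases) yields
\[
[P]_{\{f_i\}} = S^{-1}\, [P]_{\{e_i\}}\, S.
\]
So the proposition reduces to the purely matrix-theoretic claim that $\mathrm{Tr}(A) = \mathrm{Tr}(S^{-1} A S)$ for any $A \in \C^{n \times n}$ and any invertible $S \in \C^{n \times n}$.

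The heart of the argument is the cyclic property of the matrix trace: for any $X, Y \in \C^{n \times n}$,
\[
\mathrm{Tr}(XY) = \sum_{i=1}^{n} \sum_{j=1}^{n} X_{ij} Y_{ji} = \sum_{j=1}^{n} \sum_{i=1}^{n} Y_{ji} X_{ij} = \mathrm{Tr}(YX),
\]
where the middle step is just swapping the order of a finite double sum in $\C$. Applying this with $X = S^{-1} A$ and $Y = S$ yields $\mathrm{Tr}(S^{-1} A S) = \mathrm{Tr}(S \cdot S^{-1} A) = \mathrm{Tr}(A)$, which finishes the proof.

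I do not expect a genuine obstacle here; the only mild subtlety is being careful that the change-of-basis relation is written in the correct order (i.e., conjugation by $S$ rather than by $S^{-1}$), but this is fixed by the convention that $S$ encodes the new basis in terms of the old. Everything else is the standard finite double-sum swap, which is legitimate because $n$ is finite and $\C$ is commutative — precisely the features that will \emph{fail} later in the infinite-dimensional setting and motivate the trace-class theory to follow.
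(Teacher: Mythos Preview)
Your proof is correct and follows essentially the same route as the paper: relate the two matrix representations by conjugation via a change-of-basis matrix, then invoke the cyclic property $\mathrm{Tr}(XY)=\mathrm{Tr}(YX)$ to conclude. The only difference is cosmetic---you actually verify the cyclic property via the finite double-sum swap, whereas the paper simply states it.
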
 

\begin{proof}
In order for $\text{Tr}(P)$ to be well-defined, it must be shown that the trace of $P$ is independent of the chosen basis for $V$. That is, let $\set{f_i}_{i=1}^n \subset V$ be another basis for $V$. It must be shown that $\text{Tr}([P]_{\set{e_i}}) = \text{Tr}([P]_{\set{f_i}})$. Let $C$ be the change-of-basis matrix from $\set{e_i}_{i=1}^n$ to $\set{f_i}_{i=1}^n$. Note that $[P]_{\set{e_i}} = C^{-1} [P]_{\set{f_i}} C$. Also, for any $A,B \in \C^{n \times n}$, note that $\text{Tr}(AB) = \text{Tr}(BA)$. Hence,

$$\text{Tr}([P]_{\set{e_i}}) = \text{Tr}(C^{-1} [P]_{\set{f_i}} C) = \text{Tr}([P]_{\set{f_i}} C C^{-1}) = \text{Tr}([P]_{\set{f_i}}). $$

\end{proof}
Now, assume that $V$ is equipped with an inner product $\langle \cdot, \cdot \rangle$ with the linearity condition imposed on the second argument (note that by identifying $V$ with $\C^n$, we can always assume that $V$ is endowed with an inner product). Let $\set{e_j}_{j=1}^n$ be an orthonormal basis for $V$, and let $p_{ij}$ denote the $ij^{th}$ entry of $[P]_{\set{e_j}}$. That is, $P(e_j) = \sum_{i=1}^n p_{ij} e_i$. Then
\begin{equation*}
\begin{split}
\left\langle e_j, P(e_j) \right\rangle & = \left\langle e_j, \: \sum_{i=1}^n p_{ij} e_i \right\rangle\\
& = \sum_{i=1}^n p_{ij} \langle e_j, e_i \rangle\\
& = p_{jj}.
\end{split}
\end{equation*}
Since $$\text{Tr}\left([P]_{\set{e_j}}\right) = \sum_{j=1}^n p_{jj},$$
we have
\begin{equation}
\begin{split}
\text{Tr}(P)  = \sum_{j=1}^n \left\langle e_j, P(e_j) \right\rangle.\label{trace equation} 
\end{split}
\end{equation}
This is precisely the characterization of the trace that we will use to generalize the definition to infinite dimensions.

Conveniently, we may define the trace of $P$ without explicitly making reference to a basis. Let $\set{\lambda_i}_{i=1}^n \subset \C$ be the eigenvalues, with multiplicity, of $P$, so for any $z \in \C$, the eigenvalues of $I + z[P]_{\set{e_i}}$ are $\set{1 + z\lambda_i}_{i=1}^n$. Recall that the determinant of a matrix can be computed as the product of its eigenvalues, so we have
$$\det \left(I + z[P]_{\set{e_i}}\right) =  \prod_{i=1}^{n}\left(I + z \lambda_i\right).$$
The identities
$$\dfrac{d}{d z} 
 \det \left(I + z[P]_{\set{e_i}}\right) \bigg \vert_{z = 0} = \: \text{Tr}\left( [P]_{\set{e_i}}\right)$$
 and
$$\dfrac{d}{d z}\prod_{i=1}^{n}\left(I + z \lambda_i\right) \bigg \vert_{z=0} = \: \sum_{i=1}^n \lambda_i$$
imply that $\text{Tr}\left([P]_{\set{e_i}}\right) = \sum_{i=1}^N \lambda_i$.
Hence, 
$$\text{Tr}(P) = \sum_{i = 1}^n \lambda_i.$$ 

For certain classes of operators on countably-infinite-dimensional (Hilbert) spaces, this characterization of the trace as the sum of eigenvalues agrees with the formulation given in (\ref{trace equation}). Specifically, these formulations agree for trace class operators via Lidskii's theorem, the proof of which is essentially the same as the argument given above for finite dimensional operators.

\section{Infinite-Dimensional Traces}\label{infinite-dimensional traces}
Before proceeding with our discussion of traces, we must clarify the setting in which we will be working. Thus, some recalling/defining must be done.
\subsection{Hilbert Spaces}

Let $V$ be a vector space over $\C$. Recall that an \textit{inner product} on $V$ is a map $\langle \cdot , \cdot \rangle: V \times V \to \C$ such that for all $x,y,z \in V$ and $a,b \in \C$

\begin{itemize}
    \item $\langle x, y \rangle = \overline{\langle y, x \rangle}$ 
    \item $\langle x, ay + bz \rangle = a\langle x, y \rangle + b \langle x, z \rangle$ \footnote{Sometimes, this linearity condition is imposed on the first argument rather than the second.}
    \item $\langle x,x \rangle \in \R$ with $\langle x,x \rangle \geq 0$
    \item $\langle x,x \rangle = 0 $ if and only if $x = 0$
\end{itemize}

We call the space $V$ endowed with an inner product $\langle \cdot, \cdot \rangle$ an \textit{inner product space}. Given an inner product, we define the norm on $V$ to be the map $\norm{-}: V \to [0, \infty)$ given by 
$$\norm{x} = \sqrt{\langle x, x \rangle}$$ for all $x \in V$.

If an inner product space $V$ over $\C$ is complete with respect to the metric induced by $\norm{-}$, then we say that $V$ is a \textit{Hilbert space}. Although one can consider real Hilbert spaces, it is common to restrict attention to complex Hilbert spaces, which is what we shall do in this section.

Recall that, for some indexing set $B$, a subset $\set{\varphi_n}_{n \in B} \subset V$ is an \textit{orthonormal set} if $\langle \varphi_n, \varphi_m \rangle = 0$ when $n \neq m$ and $\langle \varphi_n, \varphi_m \rangle = 1$ if $n = m$. If $V$ is a Hilbert space and $B$ is countably-infinite or finite, then we say that an orthonormal set $\set{\varphi_n}_{n \in B}$ is an \textit{orthonormal basis} for $V$ if 
\begin{equation}
x = \sum_{n \in B} \langle \varphi_n, x\rangle \varphi_n \label{basis def}
\end{equation}
for all $x \in V$ where the sum converges to $x$ with respect to the metric induced by the norm on $V$ (no matter how the terms are ordered). Requiring that the indexing set $B$ is countable is not necessary; however, we will be restricting our attention to countable orthonormal bases in this paper. Recall that if a Hilbert space admits one countably-infinite orthonormal basis, then all other orthonormal bases must also be countably-infinite. Likewise, if a Hilbert space admits one finite basis, all other bases are finite. Hence, the notion of a finite-dimensional or countably-infinite-dimensional Hilbert space is well-defined.

In order to show that an orthonormal set is an orthonormal basis, it is often convenient to use the following equivalencies.

\begin{thm}  Let $\set{\varphi_n}_{n \in B} \subset V$ be a countable orthonormal set for the Hilbert space $V$. The following are equivalent.\label{equivalencies}
\end{thm}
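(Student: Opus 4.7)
The statement is cut off at the list of equivalent conditions, but in context the expected equivalences are the standard four: (i) $\{\varphi_n\}_{n \in B}$ is an orthonormal basis in the sense of (\ref{basis def}); (ii) Parseval's identity $\|x\|^2 = \sum_{n \in B} |\langle \varphi_n, x\rangle|^2$ holds for every $x \in V$; (iii) the linear span of $\{\varphi_n\}_{n \in B}$ is dense in $V$; and (iv) the only $x \in V$ with $\langle \varphi_n, x\rangle = 0$ for every $n \in B$ is $x = 0$. My plan is to prove these in a cycle $\mathrm{(i)} \Rightarrow \mathrm{(ii)} \Rightarrow \mathrm{(iii)} \Rightarrow \mathrm{(iv)} \Rightarrow \mathrm{(i)}$, building first the one tool that makes everything else routine: Bessel's inequality.

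First I would fix any finite subset $F \subset B$, expand $\|x - \sum_{n \in F} \langle \varphi_n, x\rangle \varphi_n\|^2 \geq 0$, and use orthonormality to conclude $\sum_{n \in F} |\langle \varphi_n, x\rangle|^2 \leq \|x\|^2$. Taking the supremum over finite $F$ gives Bessel's inequality. From this, the partial sums $S_F = \sum_{n \in F} \langle \varphi_n, x\rangle \varphi_n$ form a Cauchy net in $V$ (since $\|S_{F_1} - S_{F_2}\|^2 = \sum_{n \in F_1 \triangle F_2} |\langle \varphi_n, x\rangle|^2$ is controlled by a tail of a convergent series), so completeness of $V$ guarantees that $y \bdef \sum_{n \in B} \langle \varphi_n, x\rangle \varphi_n$ exists in $V$ regardless of ordering.

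For $\mathrm{(i)} \Rightarrow \mathrm{(ii)}$, I would apply $\langle \cdot, \cdot\rangle$-continuity and orthonormality to $x = \sum_n \langle \varphi_n, x\rangle \varphi_n$ to compute $\|x\|^2 = \langle x, x\rangle = \sum_n |\langle \varphi_n, x\rangle|^2$. For $\mathrm{(ii)} \Rightarrow \mathrm{(iii)}$, I would show that the partial sums $S_F$ approximate $x$ in norm by computing $\|x - S_F\|^2 = \|x\|^2 - \sum_{n \in F} |\langle \varphi_n, x\rangle|^2$ (again via direct expansion) and invoking (ii) to drive this to $0$ along an exhausting sequence of finite sets. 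The implication $\mathrm{(iii)} \Rightarrow \mathrm{(iv)}$ is essentially immediate: if $x \perp \varphi_n$ for all $n$, then by linearity and continuity $x$ is orthogonal to $\overline{\mathrm{span}}\{\varphi_n\} = V$, whence $\langle x, x\rangle = 0$.

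The step I expect to be the main obstacle, both conceptually and in bookkeeping, is $\mathrm{(iv)} \Rightarrow \mathrm{(i)}$, since it is the only implication that genuinely requires completeness of $V$ rather than just algebraic or orthogonality arguments. The plan is to use Bessel's inequality (as above) to define $y = \sum_{n \in B} \langle \varphi_n, x\rangle \varphi_n \in V$ and then verify $\langle \varphi_m, x - y\rangle = 0$ for every $m \in B$ by exchanging $\langle \varphi_m, \cdot \rangle$ with the series (justified by continuity of the inner product in its second argument, a continuity fact I would briefly verify using the Cauchy--Schwarz inequality). Condition (iv) then forces $x = y$, yielding (i). A technical point I would be careful about is that (\ref{basis def}) demands convergence independent of ordering; this is automatic from the Cauchy-net argument above, since the index set $B$ is countable and the series of non-negative terms $\sum |\langle \varphi_n, x\rangle|^2$ is absolutely convergent by Bessel, making all reorderings yield the same limit.
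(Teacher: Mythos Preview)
Your proof is correct and is the standard textbook argument. Note, however, that the paper does not actually prove this theorem: after stating the three equivalences (basis in the sense of (\ref{basis def}); the vanishing condition $\langle \varphi_n, x\rangle = 0 \Rightarrow x = 0$; and Parseval's identity) it simply remarks that proofs can be found in many analysis textbooks, citing Folland. So there is no ``paper's own proof'' to compare against.

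One small mismatch: the paper's statement lists only three equivalent conditions, not four. You added a fourth, density of the span, and routed your cycle through it. That is perfectly fine mathematically---density is the natural bridge between Parseval and the orthogonality criterion---but if you are writing up a proof for exactly the theorem as stated, you could either drop (iii) and go directly $\mathrm{(ii)} \Rightarrow \mathrm{(iv)}$ (your computation $\|x - S_F\|^2 = \|x\|^2 - \sum_{n \in F}|\langle \varphi_n,x\rangle|^2$ already does this once you apply it to an $x$ orthogonal to every $\varphi_n$), or keep density as an auxiliary lemma rather than a numbered item.
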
 \begin{enumerate}[\itshape 1.]
    \item \textit{$\set{\varphi_n}_{n \in B}$ is an orthonormal basis (according to (\ref{basis def})).}
    \item \textit{If $\langle \varphi_n, x \rangle = 0$ for all $n \in B$, then $x = 0$.}
    \item \textit{(Parseval's Identity) For all $x \in V$, $\norm{x}^2 = \sum_{n \in B} \abs{ \langle \varphi_n, x \rangle }^2.$}
\end{enumerate}

Another particularly useful result for Hilbert spaces (more generally inner product spaces) that we will be utilizing throughout the rest of this paper is the Cauchy--Schwarz inequality.

\begin{thm}[Cauchy--Schwarz Inequality]
Let $V$ be an inner product space. For all $x,y \in V$
$$\abs{\langle x,y \rangle} \leq \lVert x \rVert \cdot \lVert y \rVert.$$\label{Cauchy--Schwarz}
\end{thm}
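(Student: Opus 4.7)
The plan is the standard polynomial-minimization trick. If $y = 0$, then $\innp{x,y} = 0$ and $\norm{y} = 0$, so both sides of the inequality vanish and there is nothing to prove. Assume therefore that $y \neq 0$. For a scalar $\la \in \C$ to be chosen later, the nonnegativity of the norm gives
$$0 \leq \norm{x - \la y}^2 = \innp{x - \la y, x - \la y}.$$

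Next I would expand this using the inner-product axioms. Since the paper takes linearity in the second slot (and hence conjugate linearity in the first, by conjugate symmetry), expansion yields
$$\norm{x - \la y}^2 = \norm{x}^2 - \la\innp{x,y} - \ol{\la}\innp{y,x} + \abs{\la}^2\norm{y}^2.$$

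The key step is choosing $\la$ to make the right-hand side as small as possible. Set $\la = \innp{y,x}/\norm{y}^2$, which is permissible since $y \neq 0$. Then $\ol{\la} = \innp{x,y}/\norm{y}^2$ by conjugate symmetry, and each of the three $\la$-dependent terms simplifies, after using $\innp{y,x}\innp{x,y} = \abs{\innp{x,y}}^2$, to $\abs{\innp{x,y}}^2/\norm{y}^2$. Collecting signs, the inequality reduces to
$$0 \leq \norm{x}^2 - \frac{\abs{\innp{x,y}}^2}{\norm{y}^2},$$
and rearranging and taking square roots yields $\abs{\innp{x,y}} \leq \norm{x}\cdot\norm{y}$.

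The only real obstacle is bookkeeping: because the paper's convention puts linearity in the second slot rather than the first, one must be careful to track which of $\innp{x,y}$ and $\innp{y,x}$ appears at each step, and where complex conjugates belong. There is no deeper difficulty; once the correct choice of $\la$ is made, the entire computation collapses to one line.
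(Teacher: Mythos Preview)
Your argument is correct and is the standard proof of Cauchy--Schwarz; the expansion and the choice $\la = \innp{y,x}/\norm{y}^2$ are handled properly under the paper's second-slot-linear convention. Note, however, that the paper does not actually prove this theorem: it states the result and defers the proof to Folland, so there is no in-paper argument to compare against.
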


To conclude our discussion on Hilbert spaces, recall that, for two Hilbert spaces $\mathcal{H}$ and $\mathcal{W}$, a linear map $A:\mathcal{H} \to \mathcal{W}$ is \textit{bounded} if there exists $C \in [0,\infty)$ such that $\norm{Ax}_{\mathcal{W}} \leq C \norm{x}_{\mathcal{H}}$ for all $x \in \mathcal{H}.$ We denote the space of bounded linear maps from $\mathcal{H}$ to $\mathcal{W}$ as $L(\mathcal{H},\mathcal{W})$. Also, we let $L(\mathcal{H})$ denote the space of bounded linear maps from $\mathcal{H}$ to $\mathcal{H}$. Recall that a linear map $A: \mathcal{H} \to \mathcal{W}$ is bounded iff it is continuous iff it is continuous at $0 \in \mathcal{H}$. Here, continuity is respect to the topologies induced by the inner products on $\mathcal{H}$ and $\mathcal{W}$.

Further review on Hilbert spaces, including proofs of theorems \ref{equivalencies} and \ref{Cauchy--Schwarz}, can be found in many analysis textbooks, including \cite{Folland}.

\subsection{Trace Class Operators}\label{trace class opertors}

For the remainder of section \ref{infinite-dimensional traces}, unless stated otherwise, let $\mathcal{H}$ denote a complex Hilbert space that has a countable orthonormal basis (so every orthonormal basis for $\mathcal{H}$ is countable). We will let $N \in \N \cup \set{\infty}$ denote the dimension of $\mathcal{H}$. 

Given a bounded operator $A \in L(\mathcal{H})$, we would like to define the trace of $A$ as
\begin{equation}
\text{Tr}(A) = \sum_{n=1}^{N} \langle \varphi_n, A \varphi_n \rangle \label{trace equation but again}  
\end{equation}
\noindent where $\set{\varphi_n}_{n=1}^N \subset \mathcal{H}$ is some orthonormal basis for $\mathcal{H}$. Recall that on finite-dimensional Hilbert spaces, (\ref{trace equation but again}) agrees with the usual definition. Unlike finite-dimensional operators, the trace of an operator on an infinite-dimensional Hilbert space may not be a well-defined complex number according to the definition posed above. Consider the identity map $I$ on $\mathcal{H}$ when the dimension of $\mathcal{H}$ is countably-infinite. For any orthonormal basis $\set{\varphi_j}_{j=1}^\infty \subset \mathcal{H}$,
$$\text{Tr}(I) = \sum_{j=1}^\infty \langle \varphi_j, I \varphi_j \rangle = \sum_{j=1}^\infty \langle \varphi_j, \varphi_j \rangle = \sum_{j=1}^\infty 1,$$
which diverges to $\infty$.

The following example is even more problematic. Consider the space of all (complex) square summable sequences
$$l^2 := \set{ \set{a_n}_{n=1}^\infty \subset \C \; : \; \sum_{n=1}^\infty |a_n|^2 < \infty }$$
and the map $T:l^2 \to l^2$ given by 
$$T(\set{a_n}_{n=1}^\infty) = \set{(-1)^n a_n}_{n=1}^\infty.$$

It is straightforward to show that $T$ is linear and bounded. Now let $\set{\varphi_j}_{j=1}^{\infty} \subset l^2$ be the standard orthonormal basis for $l^2$. That is, $\varphi_j$ is the sequence where the $j^{th}$ term is $1$ and the rest of the terms are $0$. E.g. $\varphi_1 = (1,0,0,0,...), \varphi_2 = (0,1,0,0,...),$ and $ \varphi_3 = (0,0,1,0,...)$. Then 
$$\text{Tr}(T) = \sum_{n=1}^\infty \langle \varphi_j, T \varphi_j \rangle = \sum_{n=1}^\infty \langle \varphi_j, (-1)^j \varphi_j \rangle = \sum_{j=1}^\infty (-1)^j,$$
which is clearly not a well-defined quantity.

Given the issues encountered with the two previous examples, it is evident that we must be careful in determining which operators we can consider the trace of. The remainder of this section is dedicated towards defining the aptly named trace class operators, which, as we shall see, are the \textit{only} operators with well-defined traces. As a prerequisite, we start by defining the operators $A^\ast$, $\sqrt{A}$, and $\abs{A}$.

\begin{thm}[Riesz representation theorem] Let $\mathcal{H}$ be a Hilbert space. For all bounded linear functionals $f: \mathcal{H} \to \C$ there exists a unique $y_f \in \mathcal{H}$ such that $f(x) = \langle y_f, x \rangle$ for all $x \in \mathcal{H}.$

\end{thm}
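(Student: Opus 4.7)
The plan is to construct $y_f$ explicitly using a countable orthonormal basis $\set{\varphi_n}_{n=1}^N$ of $\mathcal{H}$, which is available by the standing assumption of this section, and then to deduce uniqueness by a short non-degeneracy argument.

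For existence, the natural candidate is $y_f := \sum_{n=1}^N \overline{f(\varphi_n)} \varphi_n$. Once this sum is known to converge in $\mathcal{H}$, conjugate-linearity of $\langle \cdot, \cdot \rangle$ in its first slot together with orthonormality gives $\langle y_f, \varphi_m \rangle = f(\varphi_m)$; then, using continuity of $f$ and of $\langle \cdot, \cdot \rangle$ in its second slot together with the basis expansion (\ref{basis def}), the identity $\langle y_f, x \rangle = f(x)$ extends to every $x \in \mathcal{H}$. Since $\mathcal{H}$ is complete and $\set{\varphi_n}$ is orthonormal, convergence of the series reduces to showing that the coefficients $\overline{f(\varphi_n)}$ are square summable.

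The main obstacle, and the main technical step, is proving $\sum_n |f(\varphi_n)|^2 < \infty$. I would extract this by testing $f$ on the partial sums $x_M := \sum_{n=1}^M \overline{f(\varphi_n)} \varphi_n$. Linearity of $f$ gives $f(x_M) = \sum_{n=1}^M |f(\varphi_n)|^2$, while orthonormality (or Parseval's identity, Theorem \ref{equivalencies}) gives $\|x_M\|^2 = \sum_{n=1}^M |f(\varphi_n)|^2$. If $C$ is a boundedness constant for $f$, then $|f(x_M)| \leq C \|x_M\|$ becomes $\sum_{n=1}^M |f(\varphi_n)|^2 \leq C \bigl(\sum_{n=1}^M |f(\varphi_n)|^2\bigr)^{1/2}$, and dividing (or observing triviality when the sum vanishes) yields $\sum_{n=1}^M |f(\varphi_n)|^2 \leq C^2$ uniformly in $M$. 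Passing to the limit in $M$ delivers the required square-summability.

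For uniqueness, suppose $y_1, y_2 \in \mathcal{H}$ both represent $f$. Then $\langle y_1 - y_2, x \rangle = 0$ for every $x \in \mathcal{H}$; choosing $x = y_1 - y_2$ and invoking positive-definiteness of the inner product yields $\|y_1 - y_2\|^2 = 0$, so $y_1 = y_2$.
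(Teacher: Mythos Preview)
The paper does not supply its own proof of this theorem; it is stated as background and used immediately afterward to construct the adjoint, with proofs deferred to the references (e.g.\ \cite{Folland}). Your argument is correct and self-contained under the section's standing hypothesis that $\mathcal{H}$ has a countable orthonormal basis: the square-summability estimate via the partial sums $x_M$ is the right trick, and the extension from basis vectors to all of $\mathcal{H}$ by continuity is clean.

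One point worth noting: the theorem, as worded, re-introduces $\mathcal{H}$ as an arbitrary Hilbert space, and your construction genuinely needs the countable basis. The classical basis-free proof instead observes that if $f \neq 0$ then $(\ker f)^\perp$ is one-dimensional, picks any nonzero $z$ there, and takes $y_f = \overline{f(z)}\,\|z\|^{-2} z$; this works without any separability assumption. For the purposes of this paper, where separability is assumed throughout the section, your approach is perfectly adequate, but you should flag that you are invoking the standing assumption rather than proving the theorem in the generality in which it is stated.
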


\begin{defin}
Let $\mathcal{H}$ and $\mathcal{W}$ be Hilbert spaces and $A:\mathcal{H} \to \mathcal{W}$ a bounded linear map. Define the \textit{adjoint} of $A$ to be the unique bounded linear function $A^\ast : \mathcal{W} \to \mathcal{H}$ satisfying 

$$\langle w, Ax \rangle_{\mathcal{W}} = \langle A^\ast w , x \rangle_{\mathcal{H}}$$
for all $x \in \mathcal{H}$ and $w \in \mathcal{W}$.
\end{defin}

\begin{prop}
The adjoint is well-defined.
\end{prop}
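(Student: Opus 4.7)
The plan is to construct $A^*$ pointwise via the Riesz representation theorem, then verify linearity, boundedness, and uniqueness. First I would fix $w \in \mathcal{W}$ and consider the functional $f_w : \mathcal{H} \to \C$ defined by $f_w(x) = \langle w, Ax \rangle_{\mathcal{W}}$. Linearity of $f_w$ follows from linearity of $A$ combined with linearity of $\langle \cdot, \cdot \rangle_{\mathcal{W}}$ in the second argument. Boundedness follows from Cauchy--Schwarz together with boundedness of $A$:
$$|f_w(x)| \leq \|w\|_{\mathcal{W}}\|Ax\|_{\mathcal{W}} \leq \|w\|_{\mathcal{W}} C \|x\|_{\mathcal{H}}.$$
By Riesz, there is a unique $y_w \in \mathcal{H}$ with $f_w(x) = \langle y_w, x \rangle_{\mathcal{H}}$ for every $x \in \mathcal{H}$, and I would define $A^* w := y_w$. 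This immediately gives the defining identity $\langle w, Ax\rangle_{\mathcal{W}} = \langle A^* w, x\rangle_{\mathcal{H}}$.

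Next I would verify that $A^*$ is linear. For $w_1, w_2 \in \mathcal{W}$ and $a, b \in \C$, using conjugate linearity in the first argument of the inner product,
$$\langle A^*(aw_1 + bw_2), x\rangle_{\mathcal{H}} = \langle aw_1 + bw_2, Ax\rangle_{\mathcal{W}} = \bar a \langle w_1, Ax\rangle_{\mathcal{W}} + \bar b \langle w_2, Ax\rangle_{\mathcal{W}},$$
and the right-hand side equals $\bar a \langle A^* w_1, x\rangle_{\mathcal{H}} + \bar b \langle A^* w_2, x\rangle_{\mathcal{H}} = \langle aA^* w_1 + bA^* w_2, x\rangle_{\mathcal{H}}$ for every $x$. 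Since an element of $\mathcal{H}$ is determined by the inner products against all $x$ (take $x$ to be the difference), I can conclude $A^*(aw_1 + bw_2) = aA^* w_1 + bA^* w_2$.

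For boundedness, I would apply the standard trick
$$\|A^* w\|_{\mathcal{H}}^2 = \langle A^* w, A^* w\rangle_{\mathcal{H}} = \langle w, A(A^* w)\rangle_{\mathcal{W}} \leq \|w\|_{\mathcal{W}}\,\|A\|_{\mathrm{op}}\,\|A^* w\|_{\mathcal{H}},$$
which upon dividing (handling $A^* w = 0$ separately) yields $\|A^* w\|_{\mathcal{H}} \leq \|A\|_{\mathrm{op}}\|w\|_{\mathcal{W}}$. Finally, uniqueness of $A^*$ as an operator with the defining property is immediate: if $B$ and $B'$ both satisfy the identity, then $\langle Bw - B'w, x\rangle_{\mathcal{H}} = 0$ for all $x$, and choosing $x = Bw - B'w$ forces $Bw = B'w$.

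There is no real obstacle here; the proof is entirely routine once one recognizes that Riesz representation is exactly the right tool. The only point demanding care is bookkeeping around the convention that the inner product is linear in the second slot and conjugate linear in the first, which affects how scalars move through in the linearity verification.
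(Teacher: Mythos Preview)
Your proof is correct and follows essentially the same route as the paper: construct $A^\ast$ pointwise via the Riesz representation theorem applied to the bounded functional $f_w(x) = \langle w, Ax\rangle_{\mathcal{W}}$, then verify linearity and boundedness. The paper leaves linearity and boundedness as remarks (invoking second-argument linearity and Cauchy--Schwarz respectively), whereas you spell out the details, but the argument is the same.
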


\begin{proof}
Let $\mathcal{H}, \mathcal{W}$, and $A$ be as in the previous definition. To show that the map $A^\ast$ exists, we will utilize the Riesz representation theorem.

Let $w \in \mathcal{W}$ and define the map $f_w: \mathcal{H} \to \C$ by $f_w(x) = \langle w, Ax \rangle_{\mathcal{W}}$ for all $x \in \mathcal{H}$. Since the maps $x \mapsto \langle w, x \rangle_{\mathcal{W}}$ and $A$ are linear and bounded, $f_w$ must be bounded and linear. In particular, this implies that $f_w$ is a bounded functional. Hence, by the Riesz representation theorem, there exists a unique $y_{f_{w}} \in \mathcal{H}$ satisfying
$$f_w(x) = \langle y_{f_{w}}, x \rangle_{\mathcal{H}} \iff \langle w, Ax \rangle_{\mathcal{W}} = \langle y_{f_{w}}, x \rangle_{\mathcal{H}} $$
for all $x \in \mathcal{H}$. Define the map $A^\ast: \mathcal{W} \to \mathcal{H}$ by $A^\ast(w) = y_{f_{w}}$ where $y_{f_{w}}$ is defined as above for all $w \in \mathcal{W}$. It immediately holds that $A^\ast$ is the unique map satisfying \\$\langle w, Ax \rangle_{\mathcal{W}} = \langle A^\ast (w), x \rangle_{\mathcal{H}}$. All that is left to show is that $A^\ast$ is linear and bounded. Linearity of $A^\ast$ is fairly straightforward to show using the linearity of inner products in the second argument. $A^\ast$ can be shown to be bounded using the Cauchy–Schwarz inequality. 
\end{proof}

\begin{exa}
    Consider the case when $\mathcal{H} = \C^n$ and $\mathcal{W} = \C^m$. Let $A: \C^n \to \C^m$ be a linear map. Chose a basis for each complex space, and consider $A$ to be a matrix. Recall that for $v,u \in \C^n$, the inner product of $v$ and $u$ can be written as $v^\dagger u$ where $\dagger$ denotes the conjugate transpose. For any $x \in \C^n$ and $w \in \C^m$, 
    $$\langle w, Ax \rangle = w^\dagger Ax = (A^\dagger w)^\dagger x = \langle A^\dagger w, x \rangle.$$
    Since $A^\ast$ is the unique (bounded) linear map that satisfies the equation above, it must be that $A^\ast = A^\dagger$. That is, as a matrix, the adjoint of $A$ is the conjugate transpose of itself. If the entries of $A$ are all real, then $A^\ast$ is the transpose of $A$.
\end{exa}

\begin{defin}
    Let $A$ be a bounded operator on $\mathcal{H}$. $A$ is \textit{positive-semidefinite} (denoted $A \geq 0$) if for all $x \in \mathcal{H}$, $\langle x , Ax \rangle \in \R$ with $\langle x , Ax \rangle \geq 0$.
\end{defin}

\begin{thm}[Square Root Lemma]
    Let $A$ be a bounded operator on $\mathcal{H}$ that is positive semi-definite. Then there exists a unique $B \in L(\mathcal{H})$  such that $B$ is positive semi-definite and $B^2 = A$. We define $\sqrt{A} := B$.\label{sqrt lemma}
\end{thm}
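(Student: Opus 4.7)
The plan is to construct $B$ as a norm-convergent power series in $I - A$, modeled on the scalar expansion $\sqrt{1-t} = 1 - \sum_{n=1}^\infty c_n t^n$ in which $c_n > 0$ and $\sum c_n = 1$ (the latter by Abel's theorem at $t = 1$). First I would normalize: if the theorem holds for $A/\norm{A}$ with square root $\widetilde{B}$, then $\sqrt{\norm{A}}\,\widetilde{B}$ is a positive square root of $A$, so without loss of generality $\norm{A} \leq 1$. Positivity of $A$ combined with the identity $\norm{A} = \sup_{\norm{x}=1}\langle x, Ax\rangle$ then yields $0 \leq A \leq I$, so $T := I - A$ satisfies $0 \leq T \leq I$. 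Set
$$B := I - \sum_{n=1}^\infty c_n T^n.$$
Convergence in operator norm is immediate from $\norm{T^n} \leq 1$ and $\sum c_n < \infty$, so $B \in L(\mathcal{H})$ is well-defined. The identity $B^2 = A$ then follows by squaring the series: everything in sight is a polynomial in $T$, so all operators commute, and termwise multiplication (justified by absolute convergence) reproduces the scalar identity $(1 - \sum c_n t^n)^2 = 1 - t$, giving $B^2 = I - T = A$.

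For positivity of $B$, I would approximate $B$ in operator norm by the partial sums $p_N(T)$, where $p_N(t) = 1 - \sum_{n=1}^N c_n t^n \to \sqrt{1-t}$ uniformly on $[0,1]$ (so eventually $p_N \geq 0$ on $[0,1]$). The task then reduces to a transfer principle: if $0 \leq S \leq I$ and $q$ is a real polynomial nonnegative on $[0,1]$, then $q(S) \geq 0$. This can be handled using the Lukàcs-type representation of $q$ as a sum of terms of the form $r(t)^2$, $t\,s(t)^2$, $(1-t)\,u(t)^2$, and $t(1-t)\,v(t)^2$; each summand evaluated at $S$ is positive provided one knows that $S(I-S) \geq 0$ when $0 \leq S \leq I$. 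One could alternatively sidestep the polynomial positivity issue entirely via the integral representation $\sqrt{A} = \tfrac{2}{\pi}\int_0^\infty A(A + t^2 I)^{-1}\,dt$, whose integrand is manifestly positive.

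For uniqueness, suppose $B'$ is another positive square root. Then $B' A = B'^3 = A B'$, so $B'$ commutes with every polynomial in $A$ and hence, by passing to the norm limit, with $B$. Let $C := B - B'$; then by commutativity $(B+B')C = B^2 - B'^2 = 0$. Pairing with $Cx$ yields $\langle Cx, BCx\rangle + \langle Cx, B'Cx\rangle = 0$, and since both summands are nonnegative each must vanish. Applying the existence result to $B$ itself produces $\norm{\sqrt{B}\,Cx}^2 = 0$, hence $BCx = 0$, and similarly $B'Cx = 0$. Consequently $C^2 x = (B-B')Cx = 0$, and since $C$ is self-adjoint, $\norm{Cx}^2 = \langle x, C^2 x\rangle = 0$, forcing $C = 0$.

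The main obstacle is the positivity step: transferring scalar positivity of $p_N$ to operator positivity of $p_N(T)$ requires care, either via the Lukàcs factorization (with a nontrivial bare-hands verification that $S(I-S) \geq 0$) or through the alternative integral representation. Convergence of the series, the defining identity $B^2 = A$, and the uniqueness argument (once existence is in hand and hence $\sqrt{B}$ is legitimate) are all comparatively routine.
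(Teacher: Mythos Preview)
Your proposal is correct and follows essentially the same approach as the paper's sketch: normalize so that $\norm{A} \leq 1$, then define $B$ via the power series of $\sqrt{1-z}$ applied to $I - A$ and rearrange to obtain $B^2 = A$. The paper gives only this outline and defers the remaining details to Reed--Simon, whereas you go further by supplying the positivity argument and the uniqueness proof (both of which are standard and correct as you have written them).
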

The key idea to prove \ref{sqrt lemma} is to assume that $\norm{A} \leq 1$ (by replacing $A$ with a scalar multiple of itself) so that $I + \sum_{j=1}^\infty c_j (I - A)^j$ converges absolutely with respect to the operator norm on $L(\mathcal{H})$. Here, the $c_j$ are such that $1 + \sum_{j=1}^\infty c_j z$ is the power series expansion of $\sqrt{1-z}$ about the origin in $\C$, and $I$ is the identity map on $\mathcal{H}$. Define $B:= I + \sum_{j=1}^\infty c_j (I - A)^j$, and rearrange the terms to show that $B^2 = A$. A full proof can be found in chapter 6 of \cite{math physics}.

Note that by definition of the adjoint operator, $$\langle (A^\ast A) x, x \rangle = \langle A^\ast (A x), x \rangle = \langle Ax, Ax \rangle = \norm{Ax}^2 \in [0,\infty).$$

In particular, by taking the complex conjugate it follows that \\$\langle x, (A^\ast A)x \rangle \in [0,\infty).$ Hence, $A^\ast A$ is positive semi-definite (in fact, $A^\ast A$ is positive definite). Thus, we can define the following.

\begin{defin}
    Let $A \in L(\mathcal{H})$. Define the \textit{absolute value} of $A$ to be the positive semi-definite operator

    $$|A|: = \sqrt{A^\ast A}.$$
\end{defin}

Finally, we are in a good position to define the trace class condition.

\begin{defin}[Trace Class]
    Let $A \in L(\mathcal{H})$. $A$ is \textit{trace class} if there exists an orthonormal basis $\set{\varphi_n}_{n =1}^N$ such that 

    $$\sum_{n=1}^N \langle \varphi_n, |A| \varphi_n \rangle < \infty.$$

    Note that since $|A|$ is positive semi-definite, each term in the series above is nonnegative. Thus, the series either converges to a finite real number, or it diverges to infinity.
\end{defin}

In order to prove that an operator $A$ is \textit{not} trace class, one need not consider all possible orthonormal bases of a Hilbert space. It is sufficient to check if $\sum_{n=1}^N \langle \varphi_n, |A| \varphi_n \rangle = \infty$ for a particular orthonormal basis $\set{\varphi_n}$. This is due to the following proposition.

\begin{prop}
    Let $A \in L(\mathcal{H})$. Then for any two orthonormal bases $\set{\varphi_n}_{n=1}^N$ and $\set{\psi_n}_{n=1}^N$ 

    $$\sum_{n=1}^N \langle \varphi_n, |A| \varphi_n \rangle = \sum_{n=1}^N \langle \psi_n, |A| \psi_n \rangle.$$
\end{prop}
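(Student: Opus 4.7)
The plan is to reduce the two-sided sum to a symmetric double sum of nonnegative terms, for which Tonelli's theorem (for counting measure) lets us freely interchange the order of summation.

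First I would introduce $S := \sqrt{|A|}$, which exists by the Square Root Lemma (Theorem \ref{sqrt lemma}) since $|A|$ is positive semi-definite. The operator $S$ is itself positive semi-definite, and in particular self-adjoint: the positivity condition $\langle x, Sx\rangle \in \R$ together with the polarization identity forces $\langle y, Sx\rangle = \langle Sy, x\rangle$ for all $x,y \in \mathcal{H}$. Then for any vector $v \in \mathcal{H}$ I can rewrite
$$\langle v, |A| v \rangle = \langle v, S^2 v \rangle = \langle Sv, Sv \rangle = \norm{Sv}^2.$$
Thus the quantities we wish to compare are $\sum_n \norm{S\varphi_n}^2$ and $\sum_m \norm{S\psi_m}^2$.

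Next I would apply Parseval's identity (Theorem \ref{equivalencies}) to each vector $S\varphi_n$, expanded in the basis $\set{\psi_m}_{m=1}^N$, and then use self-adjointness of $S$ to shift it to the other side:
$$\norm{S\varphi_n}^2 = \sum_{m=1}^N \abs{\langle \psi_m, S \varphi_n \rangle}^2 = \sum_{m=1}^N \abs{\langle S\psi_m, \varphi_n \rangle}^2.$$
Summing over $n$, the left-hand side gives $\sum_n \langle \varphi_n, |A|\varphi_n\rangle$, while the right-hand side is a double sum of nonnegative real numbers
$$\sum_{n=1}^N \sum_{m=1}^N \abs{\langle S\psi_m, \varphi_n \rangle}^2.$$

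The main (only substantive) step is then to swap the order of summation. Because every term is nonnegative, Tonelli's theorem applied to counting measure on $\{1,\dots,N\}\times\{1,\dots,N\}$ lets us interchange the two sums without any convergence hypothesis (the common value may be $+\infty$, which is consistent with the possibility that $A$ is not trace class). After the swap, for each fixed $m$ I apply Parseval's identity to $S\psi_m$ in the basis $\set{\varphi_n}$ to obtain $\sum_n \abs{\langle S\psi_m, \varphi_n\rangle}^2 = \norm{S\psi_m}^2 = \langle \psi_m, |A|\psi_m\rangle$. Summing over $m$ gives the desired equality. There is no real obstacle beyond verifying that $S$ is self-adjoint and invoking Tonelli; the argument is a direct Hilbert-space analogue of the classical proof that $\sum_{i,j} |a_{ij}|^2 = \sum_{j,i} |a_{ij}|^2$.
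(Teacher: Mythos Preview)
Your proof is correct and follows essentially the same route as the paper's: introduce $S=\sqrt{|A|}$, rewrite $\langle v,|A|v\rangle$ as $\norm{Sv}^2$, expand via Parseval in the other basis, move $S$ across the inner product, and interchange the nonnegative double sum. The only cosmetic difference is that you invoke self-adjointness of $S$ explicitly, whereas the paper carries $(\sqrt{|A|})^\ast$ through and finishes with a symmetry observation; the two are equivalent.
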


\begin{proof}

Utilizing both Parseval's identity and the fact that $\langle x, |A| x \rangle = \norm{\sqrt{|A|}x}^2$, we get

\begin{equation*}
\begin{split}
    \sum_{n=1}^N \langle \varphi_n, |A| \varphi_n \rangle & = \sum_{n=1}^N \norm{\sqrt{|A|} \varphi_n}^2\\ 
    & = \sum_{n=1}^N\sum_{j=1}^N \abs{\left\langle \psi_j, \sqrt{|A|} \varphi_n \right\rangle}^2\\ 
    & = \sum_{n=1}^N\sum_{j=1}^N \abs{\left\langle \varphi_n, \left(\sqrt{|A|}\right)^\ast \psi_j \right\rangle}^2\\ 
    & = \sum_{j=1}^N\sum_{n=1}^N \abs{\left\langle \varphi_n, \left(\sqrt{|A|}\right)^\ast \psi_j \right\rangle}^2\\ 
    & = \sum_{j=1}^N \norm{\left(\sqrt{|A|}\right)^\ast \psi_j}^2\\ 
\end{split}
\end{equation*}
where the order of the sums above can be interchanged since the summands are non-negative. By considering the case for which $\set{\varphi_n}_{n=1}^N$ is replaced by $\set{\psi_j}_{j=1}^N$, we get 
$$\sum_{n=1}^N \langle \psi_n, |A| \psi_n \rangle = \sum_{j=1}^N \norm{\left(\sqrt{|A|}\right)^\ast \psi_j}^2$$
as well, so
    $$\sum_{n=1}^N \langle \varphi_n, |A| \varphi_n \rangle = \sum_{n=1}^N \langle \psi_n, |A| \psi_n \rangle.$$
\end{proof}

Finally, in the following proposition, we have the result that motivates the use of the definition for trace class we presented.
\begin{prop}
    Let $A \in L(\mathcal{H})$ be trace class, and let $\set{\varphi_n}_{n=1}^{N}$ be an orthonormal basis for $\mathcal{H}.$ The series
    $$\sum_{n=1}^N \langle \varphi_n, A \varphi_n \rangle$$
    converges absolutely. Furthermore, if $\set{\psi_n}_{n=1}^N$ is another orthonormal basis, then

    $$\sum_{n=1}^N \langle \varphi_n, A \varphi_n \rangle = \sum_{n=1}^N \langle \psi_n, A \psi_n \rangle.$$
    I.e.  the trace of $A$ exists. \label{trace class well-defined trace}
\end{prop}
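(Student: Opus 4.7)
The plan is to invoke the polar decomposition $A = U|A|$, where $U \in L(\mathcal{H})$ is a partial isometry satisfying $\norm{Ux} \leq \norm{x}$ for all $x \in \mathcal{H}$. This is a standard result that is not established in the preceding sections, so either it must be cited or its construction sketched from the square root lemma (Theorem \ref{sqrt lemma}). Combining polar decomposition with the factorization $|A| = \sqrt{|A|}\cdot\sqrt{|A|}$ and the self-adjointness of $\sqrt{|A|}$ (an immediate consequence of its positive semi-definiteness on a complex Hilbert space, via polarization), each diagonal element admits the rewriting
\[
\langle \varphi_n, A\varphi_n\rangle \; = \; \langle \sqrt{|A|}\,U^\ast\varphi_n,\; \sqrt{|A|}\,\varphi_n\rangle.
\]

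To establish absolute convergence I would apply Cauchy--Schwarz (Theorem \ref{Cauchy--Schwarz}) twice: first pointwise to obtain $|\langle \varphi_n, A\varphi_n\rangle| \leq \norm{\sqrt{|A|}U^\ast\varphi_n}\,\norm{\sqrt{|A|}\varphi_n}$, and then to the resulting series in $\ell^2$, yielding
\[
\sum_n |\langle \varphi_n, A\varphi_n\rangle| \; \leq \; \left(\sum_n \norm{\sqrt{|A|}U^\ast\varphi_n}^2\right)^{1/2}\left(\sum_n \norm{\sqrt{|A|}\varphi_n}^2\right)^{1/2}.
\]
The second factor equals $\left(\sum_n \langle \varphi_n, |A|\varphi_n\rangle\right)^{1/2}$, which is finite by hypothesis. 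The first factor is handled by a Parseval/Fubini rearrangement identical in form to the one appearing in the proposition preceding the statement: it converts $\sum_n \norm{\sqrt{|A|}U^\ast\varphi_n}^2$ into $\sum_n \norm{U\sqrt{|A|}\varphi_n}^2$, which is bounded by $\sum_n \norm{\sqrt{|A|}\varphi_n}^2$ since $\norm{U} \leq 1$.

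For basis independence, I would expand $\langle \sqrt{|A|}U^\ast\varphi_n, \sqrt{|A|}\varphi_n\rangle$ via Parseval in the second basis $\set{\psi_m}$, interchange the resulting double sum (justified by the absolute convergence just obtained), and collapse the inner sum by a second application of Parseval to arrive at
\[
\sum_n \langle \varphi_n, A\varphi_n\rangle \; = \; \sum_m \langle \sqrt{|A|}\psi_m,\; U\sqrt{|A|}\psi_m\rangle.
\]
A further Parseval/Fubini manipulation, which amounts to the cyclicity of the trace applied to the factorization $A = (U\sqrt{|A|})\sqrt{|A|}$, then identifies the right-hand side with $\sum_m \langle \psi_m, A\psi_m\rangle$.

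The hard part is the polar decomposition itself: it is the single nontrivial input not already available in the excerpt, and it is what converts trace class control on $|A|$ into analytic control on $A$. The remaining steps are essentially mechanical once absolute convergence is in hand, but they demand disciplined bookkeeping of adjoints and repeated verification of absolute convergence to justify each Fubini-style interchange of summations.
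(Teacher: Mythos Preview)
Your argument is correct, but it follows a different route from the paper's. The paper invokes the \emph{canonical decomposition} of a compact operator,
\[
A = \sum_{n=1}^{N(A)} \mu_n(A)\,\langle \alpha_n, \cdot\rangle\,\beta_n,
\]
with $\{\alpha_n\},\{\beta_n\}$ orthonormal and $\mu_n(A)$ the singular values (cited from Reed--Simon). Substituting this into $\sum_m \langle \varphi_m, A\varphi_m\rangle$ and doing a single Fubini interchange (justified by $\sum_n \mu_n(A) < \infty$ together with Cauchy--Schwarz) yields the manifestly basis-free value $\sum_n \mu_n(A)\langle \beta_n, \alpha_n\rangle$; absolute convergence and basis independence fall out simultaneously. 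Your approach trades the canonical decomposition for the more primitive polar decomposition $A = U|A|$ and then does the work by hand via repeated Parseval/Fubini steps. The tradeoff: the paper's argument is shorter and lands directly on a closed formula, but it imports more structure theory (compactness of trace class operators plus the spectral theorem for compact self-adjoint $|A|$, which is what produces the $\alpha_n,\beta_n,\mu_n$). Your route needs only polar decomposition and the Hilbert--Schmidt identity already proved in the preceding proposition, at the cost of an extra cyclicity step and more bookkeeping. Both import one structural fact from outside the excerpt; yours is arguably the lighter import.
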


\begin{defin}[Trace]
    Let $A \in L(\mathcal{H})$ be trace class and $\set{\varphi_n}_{n=1}^N \subset \mathcal{H}$ an orthonormal basis. We define the \textit{trace} of $A$ as
    $$\text{Tr}(A):= \sum_{n=1}^N \langle \varphi_n, A \varphi_n  \rangle.$$
\end{defin}

\begin{proof} \textit{(of Proposition \ref{trace class well-defined trace})} \;
A trace class operator $A$ can be expressed as
\begin{equation}
A = \sum_{n=1}^{N(A)} \mu_n(A) \langle \al_n, \cdot \rangle \beta_n, \label{canonical decomp} 
\end{equation}
where $\set{\al_n}$ and $\set{\beta_n}$ are orthonormal sets, $\mu_n(A) > 0$ are eigenvalues of $|A|$, and \\$N(A) \in \N \cup \set{\infty}$. (\ref{canonical decomp}) is called the canonical decomposition of $A$.\footnote{This decomposition holds for all compact operators, not just those which are trace class.} See chapter 6 of \cite{math physics} for further details. 
By linearity and continuity of $\langle \cdot, \cdot \rangle$ in the second argument, we get
\begin{equation*}
\begin{split}
\sum_{m=1}^N \langle \varphi_m, A\varphi_m \rangle & = \sum_{m=1}^N \sum_{n=1}^{N(A)} \mu_n(A) \langle \varphi_m, \beta_n \rangle \langle \al_n, \varphi_m \rangle.
\end{split}    
\end{equation*}

Since $$\sum_{m=1}^N \abs{\langle \varphi_m, \beta_n \rangle  \langle \al_n, \varphi_m \rangle} =  \sum_{m=1}^N \abs{\langle \varphi_m, \beta_n \rangle}  \abs{\langle \al_n, \varphi_m \rangle} \leq \lVert \beta_n\rVert \cdot \lVert \al_n \rVert = 1$$ 

and 

$$\sum_{n=1}^{N(A)}\mu_n(A) < \infty,$$
we have
\begin{equation*}
\begin{split}
\sum_{m=1}^N \sum_{n=1}^{N(A)} \abs{\mu_n(A) \langle \varphi_m, \beta_n \rangle \langle \al_n, \varphi_m \rangle} & =  \sum_{n=1}^{N(A)} \mu_n(A) \sum_{m=1}^{N} \abs{\langle \varphi_m, \beta_n \rangle \langle \al_n, \varphi_m \rangle}\\
& \leq \sum_{n=1}^{N(A)} \mu_n(A)\\
& < \infty.
\end{split}
\end{equation*}
Thus, the iterated series converges absolutely (and so does the original series) so we can switch the order of the iterated sum. Utilizing the fact that $\sum_{m=1}^{N} \langle \varphi_m, x \rangle \langle y, \varphi_m \rangle = \langle x,y \rangle$ for any $x,y \in \mathcal{H}$, we have

\begin{equation*}
\begin{split}
\sum_{m=1}^N \langle \varphi_m, A\varphi_m \rangle & = \sum_{n=1}^{N(A)} \sum_{m=1}^N  \mu_n(A) \langle \varphi_m, \beta_n \rangle \langle \al_n, \varphi_m \rangle\\
& = \sum_{n=1}^{N(A)} \mu_n(A) \langle \beta_n, \al_n \rangle.
\end{split}
\end{equation*}

Since the value of the series $\sum_{m=1}^{N} \langle \varphi_m, A \varphi_m \rangle$ does not depend on the chosen orthonormal basis $\set{\varphi_n}$, the proof is complete.

\end{proof}

For an operator $A \in L(\mathcal{H})$ to have a well-defined trace, we require that, for any orthornormal bases $\set{\varphi_n}$ and $\set{\psi_n}$ of $\mathcal{H}$, 
$\sum_{n=1}^N \langle \varphi_n, A \varphi_n \rangle = \sum_{n=1}^N \langle \psi_n, A \psi_n \rangle$ (where this quantity is finite). In particular, by reordering $\set{\varphi_n}$, we require that $\sum_{n=1}^N \langle \varphi_n, A \varphi_n \rangle$ converges to the same value no matter how the terms are arranged. This is equivalent to saying that $\sum_{n=1}^N \langle \varphi_n, A \varphi_n \rangle$ converges absolutely. That is, for $A$ to have a well-defined trace, the series $\sum_{n=1}^N \langle \varphi_n, A \varphi_n \rangle$ must necessarily converge absolutely for any orthonormal basis $\set{\varphi_n}$. It turns out this condition is equivalent to $A$ being trace class, implying that trace class operators are the only operators with well-defined traces.

\begin{lemma}
    Let $A \in L(\mathcal{H})$ be self-adjoint. That is, $A^\ast = A$. Then, $A$ is trace class if and only if for every orthonormal basis $\set{\varphi_n}_{n=1}^N \subset \mathcal{H}$ the sum $\sum_{n=1}^N \langle \varphi_n, A \varphi_n \rangle$ converges absolutely.\label{self adjoint trace equivalency}
\end{lemma}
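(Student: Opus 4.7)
The forward direction is immediate from Proposition \ref{trace class well-defined trace}, which already establishes absolute convergence of $\sum_{n=1}^{N}\langle \varphi_n, A\varphi_n\rangle$ whenever $A$ is trace class, for every orthonormal basis $\set{\varphi_n}$; self-adjointness is not used there. The substantive content of the lemma is the reverse direction.

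For $(\Leftarrow)$, the idea is to exhibit a single orthonormal basis along which $\langle \varphi_n, |A|\varphi_n\rangle = \abs{\langle \varphi_n, A\varphi_n\rangle}$ term by term, so that summing converts the hypothesis directly into the trace class condition. To build such a basis I would invoke the spectral theorem for bounded self-adjoint operators to obtain the orthogonal decomposition $\mathcal{H} = \mathcal{H}_+ \oplus \mathcal{H}_0 \oplus \mathcal{H}_-$ coming from the spectral projections $\chi_{(0,\infty)}(A)$, $\chi_{\set{0}}(A)$, and $\chi_{(-\infty, 0)}(A)$. Because $|A|$ is a function of $A$, it preserves this decomposition and acts as $A$ on $\mathcal{H}_+$, as $0$ on $\mathcal{H}_0$, and as $-A$ on $\mathcal{H}_-$. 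Choose an orthonormal basis within each subspace and concatenate; then for each basis vector $\varphi$, either $\langle \varphi, A\varphi\rangle \geq 0$ with $|A|\varphi = A\varphi$, or $\langle \varphi, A\varphi\rangle \leq 0$ with $|A|\varphi = -A\varphi$, or $A\varphi = 0$. In all three cases $\langle \varphi, |A|\varphi\rangle = \abs{\langle \varphi, A\varphi\rangle}$, and summing yields $\sum \langle \varphi_n, |A|\varphi_n\rangle = \sum \abs{\langle \varphi_n, A\varphi_n\rangle} < \infty$, so $A$ is trace class.

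The main obstacle is that the spectral theorem for bounded self-adjoint operators has not been developed in this exposition, so its invocation is a substantial black box. A more elementary route using only Theorem \ref{sqrt lemma} is to define the positive and negative parts $A_\pm := \tfrac{1}{2}(|A| \pm A)$; using that $|A|$ commutes with $A$ (visible from the power series construction of $\sqrt{\cdot}$), one checks $A_+ + A_- = |A|$, $A_+ - A_- = A$, and $A_+ A_- = A_- A_+ = 0$. Establishing positivity of $A_\pm$ without the full spectral theorem takes a bit of care (one can argue from $A_\pm^2 = |A|A_\pm$ together with the uniqueness clause of the square root lemma), but once this is in hand the decomposition $\mathcal{H} = \overline{\mathrm{ran}\, A_+} \oplus \ker A \oplus \overline{\mathrm{ran}\, A_-}$ plays the same role as the spectral decomposition above and the remainder of the argument proceeds identically.
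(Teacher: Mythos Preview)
Your second route via $A_\pm = \tfrac{1}{2}(|A| \pm A)$ is precisely the paper's approach: the paper defines the same positive and negative parts, establishes their positivity through the commutation of $A$ with $|A|$ (and the identity $|A|^2 = A^2$), and builds an orthonormal basis adapted to the resulting orthogonal decomposition of $\mathcal{H}$. The only difference is in the final step: where you sum $\langle \varphi_n, |A|\varphi_n\rangle = \abs{\langle \varphi_n, A\varphi_n\rangle}$ directly along the adapted basis, the paper instead shows that $A_+$ and $A_-$ are each trace class (since $|A_\pm| = A_\pm$ for positive operators) and then cites the fact that trace class operators form a vector space to conclude for $A = A_+ - A_-$. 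Your endgame is slightly cleaner because it avoids that external citation. Your first route via the full spectral theorem is also correct and is the standard quick argument, but, as you correctly flag, it sits outside the toolkit developed in this exposition.
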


\begin{proof}
 The ``only if'' implication has already been proved in proposition \ref{trace class well-defined trace}. Suppose that $\sum_{n=1}^N \abs{ \langle \varphi_n, A \varphi_n \rangle} < \infty$ for every orthonormal basis $\set{\varphi_n}_{n=1}^N$. Our goal is to define two positive semi-definite operators $A_+$ and $A_-$ such that $A = A_+ - A_-$, along with an orthonormal basis $\set{\varphi_n} \cup \set{\psi_n}$ for $\mathcal{H}$ such that $A_+\psi_n = 0$ and $A_- \varphi_n = 0$. Letting $\set{e_m}_{m \in \N \sqcup \N} = \set{\varphi_n}_{n \in \N} \cup \set{\psi_n}_{n \in \N}$ denote this orthonormal basis, we will then have
 \begin{equation*}
 \begin{split}
\sum_{m \in \N \sqcup \N} \langle e_m, A_+ e_m \rangle & = \sum_{n=1}^N \abs{\langle  \varphi_n , A \varphi_n \rangle}\\
& \leq \sum_{m \in \N \sqcup \N} \abs{\langle e_m, A e_m \rangle}\\
& < \infty
 \end{split}
 \end{equation*}
and similarly, 
\begin{equation*}
\begin{split}
\sum_{m \in \N \sqcup \N} \langle e_m, A_- e_m \rangle & = \sum_{n=1}^N \abs{ \langle \psi_n, A \psi_n  \rangle}\\
& \leq \sum_{m \in \N \sqcup \N} \abs{\langle e_m, A e_m \rangle}\\
& < \infty
\end{split}
\end{equation*}
since $A\varphi_n = A_+ \varphi_n$ and $A \psi_n = A_- \psi_n$. As $A_+$ and $A_-$ are both positive semi-definite, $|A_+| = A_+$ and $|A_-| = A_-$, so this will imply that $A_+$ and $A_-$ are both trace class. The set of trace class operators on $\mathcal{H}$ forms a vector space (see chapter 6, section 6 of \cite{math physics}) so it will follow that $A = A_+ - A_-$ is trace class.
 
Define $A_+:= \frac{1}{2}\left( |A| + A \right)$ and $A_-:= \frac{1}{2}\left( |A| - A \right)$, so $A = A_+ - A_-$. Now let $P_-$ be the orthogonal projection onto $\text{Ker}(A_+)$ and $P_+:= I - P_-$, where $\text{Ker}(A_+)$ is the kernel of $A_+$ and $I$ is the identity on $\mathcal{H}$. Note that $P_+ = I - P_-$ is an orthogonal projection since $P_-$ is an orthogonal projection. Then $(|A| + A)P_- = 0$, so by taking the adjoint of each side, we have
\begin{equation*}
\begin{split}
\big((|A| + A)P_-\big)^\ast = 0 & \implies P_-^\ast(|A|^\ast + A^\ast) = 0\\
& \implies P_-(|A| + A) = 0,
\end{split}
\end{equation*}
since positive semi-definite operators (i.e. $P_-$ and $|A|$) are self-adjoint, and $A$ is self-adjoint by assumption. Also, since $A |A| = |A|A$ and $|A|^2 = \left(\sqrt{A^2}\right)^2 = A^2$, we have $A_+ A_- =  0$, implying that $\text{Im}(A_-) \subset \text{Ker}(A_+)$ where $\text{Im}(A_-)$ is the image of $A_-$. Hence, for any $x \in \mathcal{H}$, $A_- (x) \in \text{Ker}(A_+)$, so $P_- (A_-(x)) = A_-(x)$. That is, $P_-A_- = A_-$, which clearly implies that
$$P_- (|A| - A) = |A| - A.$$
The above equations imply that $2 P_- A = P_-(|A| + A) - P_-(|A| - A) = A - |A|$, so
$$|A| = (I - 2P_-)A.$$
By taking the adjoint of both sides, we also have
$$|A| = A(I - 2P_-).$$
Likewise, $2P_- |A| = P_-(|A| + A) + P_-(|A| - A) = |A| - A$, so 
$$A = (I - 2P_-)|A|,$$
and
$$A = |A|(I - 2P_-).$$
By replacing $A$ with $(I - 2P_-)|A|$ in $A_+ = \frac{1}{2}(|A| + A)$, we get
\begin{equation*}
\begin{split}
A_+ & = \frac{1}{2}(|A| + A)\\
& = \frac{1}{2}(|A| + (I - 2P_-)|A|)\\
& = (I - P_-)|A|\\
& = P_+ |A|.
\end{split}
\end{equation*}
By replacing $A$ with $|A|(I - 2P_-)$ instead, we have
$$A_+ = |A|P_+.$$
By a similar argument, 
$$A_- = P_- |A| = |A|P_-.$$
Since $A_+$ and $A_-$ are the product of two commuting, positive semi-definite operators, they are both positive semi-definite.

Let's now construct the orthonormal basis $\set{\varphi_n} \cup \set{\psi_n}$. By definition of $P_+$ and $P_-$,
 $$P_+ + P_- = I.$$
Given this equality, we claim that $\text{Ker}\left(P_-\right) = \text{Im}\left(P_+\right)$  (since $P_+$ is an orthogonal projection, $\text{Im}\left(P_+\right)$ is simply the subspace $P_+$ projects onto). Indeed, let \\$x \in \text{Ker}\left(P_-\right)$. Then $P_-x + P_+x = x$ implies that $P_+x = x \implies x \in \text{Im}\left( P_+ \right).$ Now let $y \in \text{Im}\left( P_+ \right)$, so $y = P_+x$ for some $x \in \mathcal{H}$. From the decomposition $\mathcal{H} = \text{Im}\left( P_-\right) \oplus \left(\text{Im}\left( P_-\right)\right)^\perp$, we have $x = u + v$ for some $u = P_-x \in \text{Im}\left( P_-\right)$ and $v \in \left(\text{Im}\left( P_-\right)\right)^\perp$. Since $P_-$ is an orthogonal projection, it's clear that $ \left(\text{Im}\left( P_-\right)\right)^\perp = \text{Ker}\left( P_-\right)$. Hence, $v \in \text{Ker}\left( P_-\right)$, so 
\begin{equation*}
\begin{split}
P_+x + P_-x = x & \implies P_+x + u = u + v\\
& \implies P_+x = v\\
& \implies y = v\\
& \implies y \in \text{Ker}\left( P_-\right).
\end{split}
\end{equation*} 

Since we have shown that $\text{Ker}\left(P_-\right) = \text{Im}\left(P_+\right)$, and it also holds that \\$\text{Ker}\left( P_+\right) = \left(\text{Im}\left( P_+\right)\right)^\perp$ (since, once again, $P_+$ is an orthogonal projection) the decomposition $\mathcal{H} = \text{Im}\left( P_+ \right) \oplus \left(\text{Im}\left( P_+ \right) \right)^\perp$ is equivalent to $\mathcal{H} = \text{Ker}\left(P_-\right) \oplus \text{Ker}\left( P_+\right)$. Let $\set{\varphi_n}_{n=1}^N$ be an orthonormal basis for $\text{Ker}\left(P_-\right)$ and $\set{\psi_n}_{n=1}^N$ an orthonormal basis for $\text{Ker}\left( P_+\right)$, so $\set{\varphi_n} \cup \set{\psi_n}$ is an orthonormal basis for $\mathcal{H}$ with $P_-\varphi_n = 0$ and $P_+\psi_n = 0.$ Recall $A_+ = |A|P_+$ and $A_- = |A|P_-$, so $A_- \varphi_n = 0$ and $A_+ \psi_n = 0$.

Since we have constructed $A_+,A_- \geq 0$ with $A = A_+ - A_-$ such that $A_+ \psi_n = 0$ and $A_- \varphi_n = 0$, the proof is complete by the argument above.
\end{proof}

\begin{prop}
    Let $A \in L(\mathcal{H})$. $A$ is trace class if and only if for every orthonormal basis $\set{\varphi_n}_{n=1}^N \subset \mathcal{H}$ the sum 
    $\sum_{n=1}^N \langle \varphi_n, A \varphi_n \rangle$
    converges absolutely.\label{trace class equivalency}
\end{prop}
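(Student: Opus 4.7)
The ``only if'' direction is already contained in Proposition \ref{trace class well-defined trace}, so the plan is to handle the ``if'' direction by reducing to the self-adjoint case that Lemma \ref{self adjoint trace equivalency} has already settled. The natural tool is the Cartesian decomposition
$$A = B + iC, \qquad B := \frac{A + A^\ast}{2}, \qquad C := \frac{A - A^\ast}{2i},$$
where both $B$ and $C$ are bounded and self-adjoint. Since the self-adjoint case is done and the trace class operators form a vector space (as cited from \cite{math physics} in the proof of Lemma \ref{self adjoint trace equivalency}), it will suffice to show that $B$ and $C$ individually satisfy the absolute convergence hypothesis for every orthonormal basis.

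The key computational observation is that, for any $\varphi \in \mathcal{H}$, one has $\langle \varphi, A^\ast \varphi \rangle = \overline{\langle \varphi, A \varphi \rangle}$, so
$$\langle \varphi, B\varphi \rangle = \operatorname{Re}\langle \varphi, A\varphi\rangle, \qquad \langle \varphi, C\varphi \rangle = \operatorname{Im}\langle \varphi, A\varphi\rangle.$$
Each of these is real (consistent with self-adjointness) and bounded in absolute value by $\abs{\langle \varphi, A\varphi\rangle}$. Thus for any orthonormal basis $\set{\varphi_n}_{n=1}^N$, both
$\sum_{n=1}^N \abs{\langle \varphi_n, B \varphi_n\rangle}$ and $\sum_{n=1}^N \abs{\langle \varphi_n, C \varphi_n\rangle}$ are dominated termwise by $\sum_{n=1}^N \abs{\langle \varphi_n, A \varphi_n\rangle}$, which is finite by assumption.

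Applying Lemma \ref{self adjoint trace equivalency} to each of $B$ and $C$ then gives that $B$ and $C$ are trace class. Finally, using that the trace class operators are closed under scalar multiplication and addition, $A = B + iC$ is trace class, completing the proof.

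The main obstacle I anticipate is not analytical but structural: one must be careful that the decomposition $A = B + iC$ makes sense in the present framework (it does, since $A^\ast$ exists by the earlier construction, and $B, C \in L(\mathcal{H})$ since $L(\mathcal{H})$ is closed under adjoints and linear combinations), and that the vector space property of trace class operators is available. Everything else is a one-line domination of nonnegative terms by $\abs{\langle \varphi_n, A\varphi_n\rangle}$.
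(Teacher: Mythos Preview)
Your proof is correct and follows essentially the same approach as the paper: decompose $A$ into its self-adjoint real and imaginary parts, use the hypothesis to get absolute convergence for each, apply Lemma \ref{self adjoint trace equivalency}, and conclude via the vector space structure of the trace class. Your bounding step is in fact slightly cleaner than the paper's, since you directly identify $\langle \varphi, B\varphi\rangle = \operatorname{Re}\langle \varphi, A\varphi\rangle$ and $\langle \varphi, C\varphi\rangle = \operatorname{Im}\langle \varphi, A\varphi\rangle$ rather than passing through the triangle inequality on $A$ and $A^\ast$ separately.
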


\begin{proof}
Let $\text{Re}(A): = \frac{1}{2}(A + A^\ast)$ and $\text{Im}(A):= \frac{1}{2i}(A - A^\ast)$. Note that 
$$A = \text{Re}(A) + i \text{Im}(A).$$
Suppose that $\sum_{n=1}^N \abs{ \langle \varphi_n, A \varphi_n \rangle} < \infty$ for every orthonormal basis $\set{\varphi_n}_{n=1}^N$. Then
\begin{equation*}
\begin{split}
 \sum_{n=1}^N \abs{ \langle \varphi_n, A^\ast \varphi_n \rangle} & =  \sum_{n=1}^N \abs{ \langle A \varphi_n, \varphi_n \rangle}\\ 
 & = \sum_{n=1}^N \abs{ \langle \varphi_n, A \varphi_n \rangle}\\
 & < \infty.
\end{split}
\end{equation*}
Thus,
\begin{equation*}
\begin{split}
\sum_{n=1}^N \abs{\langle \varphi_n, (A + A^\ast)\varphi_n   \rangle } & = \sum_{n=1}^N \abs{\langle \varphi_n, A \varphi_n \rangle + \langle \varphi_n, A^\ast \varphi_n \rangle}\\
& \leq \sum_{n=1}^N \abs{\langle \varphi_n, A \varphi_n \rangle} + 
 \sum_{n=1}^N \abs{\langle \varphi_n, A^\ast \varphi_n \rangle}\\
& < \infty.
\end{split}
\end{equation*}
Similarly, $\sum_{n=1}^N \abs{\langle \varphi_n, (A - A^\ast) \varphi_n  \rangle} < \infty$, and any scalar multiple of $A + A^\ast$ and $A - A^\ast$ also satisfies this property. In particular, \begin{equation*}
\sum_{n=1}^N \abs{\langle \varphi_n, \text{Re}(A) \varphi_n \rangle} < \infty  
\end{equation*}
and
$$\sum_{n=1}^N \abs{\langle \varphi_n, \text{Im}(A) \varphi_n \rangle} < \infty.$$
Since $\text{Re}(A)$ and $\text{Im}(A)$ are self-adjoint, they are both trace class by lemma \ref{self adjoint trace equivalency}. Therefore, $A = \text{Re}(A) + i \text{Im}(A)$ is trace class as well.
\end{proof}

Note that, in general, the sum $\sum_{n=1}^N \langle \varphi_n, A \varphi_n \rangle$ converging absolutely for \textit{one} orthonormal basis does not imply that $A$ is trace class. This sum must converge absolutely for \textit{all} orthonormal bases to guarantee $A$ is trace class. Let's look at an example illustrating this point.
\begin{exa}
 Consider once again $l^2$, the Hilbert space of square summable sequences of complex numbers. Let $\set{\varphi_n}_{n=1}^\infty$ denote the standard basis on $l^2$. E.g. $\varphi_1 = (1,0,0,0,...), \varphi_2 = (0,1,0,0,...)$, and $\varphi_3 = (0,0,1,0,0,...)$. Now define 
$$\psi_n := \frac{1}{\sqrt{n(n+1)}}\left( \sum_{i=1}^n \varphi_i - n\varphi_{n+1} \right)$$
for each $n \in \N$. E.g. $\psi_1 = \frac{1}{\sqrt{2}}(1, -1, 0, 0,...)$, $\psi_2 = \frac{1}{\sqrt{6}}(1,1,-2,0,0,...)$, and \\$\psi_3 = \frac{1}{\sqrt{12}}(1,1,1,-3,0,0,...)$. Showing that $\set{\psi_n}_{n=1}^\infty$ is an orthonormal set is a straightforward calculation. Let's show that it is an orthonormal basis. Let $a_{\bullet} \in l^2$ be such that $\langle \psi_n, a_\bullet \rangle = 0$ for all $n \in \N$. $\langle \psi_1, a_\bullet \rangle = \frac{1}{\sqrt{2}} \cdot a_1 - \frac{1}{\sqrt{2}}a_2$, so $\langle \psi_1, a_\bullet \rangle = 0$ implies that $a_2 = a_1$. Similarly, 
$$\langle \psi_2, a_\bullet \rangle = \frac{1}{\sqrt{6}}(a_1 + a_2 - 2a_3) = \frac{1}{\sqrt{6}}(a_1 + a_1 - 2a_3) = \frac{1}{\sqrt{6}}(2a_1 - 2a_3),$$
so $\langle \psi_2, a_\bullet \rangle = 0$ implies that $a_3 = a_1$. Via an induction argument, it follows that $a_n = a_1$ for all $n$. The only way $a_\bullet = (a_1, a_1, a_1,...)$ can be square summable is if $a_1 = 0$. That is, $a_\bullet = 0$. Therefore, $\set{\psi_n}$ is an orthonormal basis for $l^2$ by theorem \ref{equivalencies}.

Now let $L:l^2 \to l^2$ be the left-shift operator defined as
$$L\left(\set{a_n}_{n=1}^\infty\right) = \set{a_{n+1}}_{n=1}^\infty.$$
For example, $L\varphi_1 = (0,0,0,...)$, $L\varphi_2 = (1,0,0,0,...)$, and $L\varphi_3 =  (0,1,0,0,0,...)$. Note that 
$$\sum_{n=1}^\infty \langle \varphi_n, L \varphi_n \rangle = 0,$$
but
$$\sum_{n=1}^\infty \langle \psi_n, L \psi_n  \rangle = \sum_{n=1}^\infty -\frac{1}{n(n+1)}.$$
Even though both of these series converge absolutely, $$\sum_{n=1}^\infty \langle \varphi_n, L \varphi_n \rangle \neq \sum_{n=1}^\infty \langle \psi_n, L \psi_n  \rangle,$$ so $L$ is not trace class.\\
\end{exa}

As previously alluded to, trace class operators are subject to Lidskii's theorem--- a very natural result for calculating traces considering its presence in finite dimensions.
\begin{thm}[Lidskii's Theorem] Let $A \in L(\mathcal{H})$ be trace class with eigenvalues $\set{\lambda_n}_{n=1}^{N(A)}$ such that each eigenvalue is repeated according to its algebraic multiplicity. Then 

$$\text{Tr}(A) = \sum_{n=1}^{N(A)} \lambda_n.$$    
\end{thm}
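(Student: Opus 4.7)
The plan is to mimic the finite-dimensional derivation at the end of Section~1, where $\text{Tr}(P)$ was recovered both as $\frac{d}{dz}\det(I+zP)|_{z=0}$ and, via the factorization $\det(I+zP)=\prod_i(1+z\lambda_i)$, as $\sum_i \lambda_i$. The infinite-dimensional extension rests on replacing the characteristic polynomial by the \emph{Fredholm determinant}, an entire function of $z$ which specializes to the trace through a first-derivative identity and admits a Weierstrass-type product expansion over the eigenvalues.

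First, I would define the Fredholm determinant
\[
\det(I+zA) := \sum_{k=0}^{\infty} z^k\,\text{Tr}\bigl(\Lambda^k A\bigr),
\]
where $\Lambda^k A$ denotes the induced operator on the $k$-th exterior power of $\mathcal{H}$. The standard estimate $|\text{Tr}(\Lambda^k A)|\le \|A\|_1^k/k!$ (see chapter 6 of \cite{math physics}) shows the series converges to an entire function of order at most $1$, with $|\det(I+zA)|\le e^{|z|\,\|A\|_1}$. Differentiating term-by-term at $z=0$ immediately yields $\frac{d}{dz}\det(I+zA)|_{z=0}=\text{Tr}(A)$, exactly as in the finite-dimensional case.

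Next, I would establish the product expansion $\det(I+zA)=\prod_{n=1}^{N(A)}(1+z\lambda_n)$ by approximation. Truncating the canonical decomposition (\ref{canonical decomp}) produces finite-rank trace class operators $A_N\to A$ in trace norm, and for each $A_N$ the identity reduces to the finite-dimensional determinant formula from Section~1. Continuity of $\det(I+z\cdot)$ in trace norm, together with Weyl's inequality $\sum_n |\lambda_n(A)|\le\|A\|_1$ --- which both forces absolute convergence of the infinite product and controls the eigenvalues under perturbation --- lets the identity pass to the limit. Equivalently, one can identify the zeros of the entire function $\det(I+zA)$ directly with the $-1/\lambda_n$ (with algebraic multiplicities supplied by the spectral theory of compact operators) and invoke Hadamard factorization in genus zero, using the growth bound above to conclude that $\det(I+zA)$ agrees with its canonical product up to a multiplicative constant that equals $1$ by evaluation at $z=0$. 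Equating the two expressions for $\frac{d}{dz}\det(I+zA)|_{z=0}$ --- namely $\text{Tr}(A)$ and $\sum_n \lambda_n$ --- then gives the theorem.

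The hard part is the product expansion. In finite dimensions it is simply the fundamental theorem of algebra, but in infinite dimensions it requires nontrivial input from the spectral theory of compact operators to match orders of vanishing of $\det(I+zA)$ with algebraic multiplicities of eigenvalues, combined with either a delicate trace-norm stability argument for eigenvalues or the full Hadamard factorization machinery. Bridging the gap between the singular values $\mu_n(A)$ appearing in (\ref{canonical decomp}), which are controlled by hypothesis, and the eigenvalues $\lambda_n(A)$, whose summability is the content of Weyl's inequality, is the technical heart of Lidskii's theorem.
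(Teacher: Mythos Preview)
The paper does not supply its own proof of Lidskii's theorem; it simply states the result and refers the reader to \cite{Operator Theory} (Simon's \emph{Operator Theory}). The only internal guidance is the remark at the end of Section~1 that ``the proof of which is essentially the same as the argument given above for finite dimensional operators,'' i.e., the determinant/derivative identity $\frac{d}{dz}\det(I+zP)\big|_{z=0}=\text{Tr}(P)$ together with the factorization of $\det(I+zP)$ over eigenvalues.

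Your outline follows exactly this Fredholm-determinant route, which is the approach taken in the cited reference, so in that sense you are aligned with what the paper points to. Your sketch is accurate in its architecture and honest about where the real work lies: Weyl's inequality for the summability of eigenvalues, and the identification of the zeros of $\det(I+zA)$ (with multiplicity) with the $-1/\lambda_n$. One caution: the approximation argument you mention --- passing from finite-rank $A_N$ to $A$ in trace norm --- does give continuity of $\det(I+z\,\cdot\,)$, but eigenvalues of non-normal compact operators are not stable under trace-norm perturbation in a way that directly transfers the product formula; the Hadamard-factorization route you list as the alternative is really the one that closes the argument, and is what Simon does.
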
 
A proof of Lidskii's theorem can be found in \cite{Operator Theory}.

\subsection{Diagonalizable Operators}

Suppose $A \in L(\mathcal{H})$ is trace class. Suppose further that there exists an orthonormal basis $\set{v_n}_{n=1}^{N} \subset \mathcal{H}$ such that each $v_n$ is an eigenvector of $A$ with eigenvalue $\lambda_n$. That is, $\set{v_n}_{n=1}^{N}$ is an eigenbasis with respect to the operator $A$. Then, without needing to invoke Lidskii's theorem, the trace of $A$ is calculated as
\begin{equation}
\text{Tr}(A) = \sum_{n=1}^N \langle v_n, A v_n \rangle = \sum_{n=1}^N \langle v_n, \lambda_n v_n \rangle = \sum_{n=1}^N \lambda_n \langle v_n, v_n \rangle = \sum_{n=1}^N \lambda_n. \label{equation for calculating trace using eigenbasis} 
\end{equation}

In the finite dimensional case, matrices corresponding to an operator for which there exists an eigenbasis are said to be diagonalizable. We extend this nomenclature to operators on infinite dimensional Hilbert spaces as well.

Diagonalizable operators adhere to a converse of (\ref{equation for calculating trace using eigenbasis}) in that, if we have an eigenbasis for an operator, we can use it to determine if said operator is trace class, as is shown in proposition \ref{compute trace with eigvals}. This result provides an effective method to calculate traces that will be utilized extensively in section \ref{traces involving inverse laplace}.

\begin{prop}
    Let $A \in L(\mathcal{H})$ and suppose $\set{e_m}_{m=1}^N \subset \mathcal{H}$ is an (orthonormal) eigenbasis for $A$ with corresponding eigenvalues $\set{\lambda_m}_{m=1}^N$ such that $\sum_{m=1}^N \lambda_m$ converges absolutely. Then $A$ is trace class. Moreover,
    $$\text{Tr}(A) = \sum_{m=1}^N \lambda_m.$$\label{compute trace with eigvals}
\end{prop}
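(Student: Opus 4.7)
The plan is to leverage the uniqueness of the positive square root (Theorem \ref{sqrt lemma}) to transfer the eigenbasis structure from $A$ to $|A|$, show the eigenvalues of $|A|$ are precisely $|\lambda_m|$, and then apply the already-established results on trace class operators.

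First, I would show that the eigenbasis $\set{e_m}$ also diagonalizes $A^\ast$. Using $\langle A^\ast e_m, e_k\rangle = \langle e_m, A e_k\rangle = \lambda_k \langle e_m, e_k\rangle = \lambda_k \delta_{mk}$, and expanding $A^\ast e_m$ in the basis, one gets $A^\ast e_m = \overline{\lambda_m} e_m$. Consequently $A^\ast A e_m = |\lambda_m|^2 e_m$, so $\set{e_m}$ is also an eigenbasis for $A^\ast A$.

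Next, I would construct a candidate square root directly. Since $A$ is bounded, $|\lambda_m| \leq \|A\|$ for all $m$, so the formula $Bx := \sum_{m=1}^N |\lambda_m| \langle e_m, x\rangle e_m$ defines a bounded operator on $\mathcal{H}$ with $\|B\| \le \|A\|$ (convergence follows from Parseval and the uniform bound on $|\lambda_m|$). A direct computation gives $B^2 x = \sum_m |\lambda_m|^2 \langle e_m, x\rangle e_m = A^\ast A x$, and $\langle x, Bx\rangle = \sum_m |\lambda_m| |\langle e_m, x\rangle|^2 \geq 0$, so $B \geq 0$. By the uniqueness part of Theorem \ref{sqrt lemma}, $|A| = B$, and in particular $|A| e_m = |\lambda_m| e_m$.

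Finally, I would apply the definitions. We have
\begin{equation*}
\sum_{m=1}^N \langle e_m, |A| e_m \rangle = \sum_{m=1}^N |\lambda_m| < \infty
\end{equation*}
by hypothesis, so $A$ is trace class. Proposition \ref{trace class well-defined trace} then guarantees that the trace can be computed in any orthonormal basis; choosing $\set{e_m}$ gives $\text{Tr}(A) = \sum_{m=1}^N \langle e_m, A e_m\rangle = \sum_{m=1}^N \lambda_m$, which is exactly (\ref{equation for calculating trace using eigenbasis}).

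The only genuinely nontrivial step is identifying $|A|$ with the naive candidate operator $B$; everything else is bookkeeping. The identification hinges on the uniqueness clause of the square root lemma, without which one would have to work harder to show that the absolute value inherits the same diagonalization as $A$.
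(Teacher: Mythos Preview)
Your proof is correct, but it takes a genuinely different route from the paper's. The paper never computes $|A|$ at all; instead it fixes an \emph{arbitrary} orthonormal basis $\{\varphi_n\}$, expands each $\varphi_n$ in the eigenbasis $\{e_m\}$, and uses Parseval's identity (twice) to show directly that $\sum_n \langle \varphi_n, A\varphi_n\rangle$ converges absolutely to $\sum_m \lambda_m$. Since this holds for every orthonormal basis, Proposition~\ref{trace class equivalency} gives that $A$ is trace class. Your argument instead identifies $|A|$ explicitly as the diagonal operator with eigenvalues $|\lambda_m|$ via the uniqueness clause of Theorem~\ref{sqrt lemma}, and then appeals to the bare \emph{definition} of trace class (a single basis suffices) together with Proposition~\ref{trace class well-defined trace}. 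Your route is arguably cleaner and more conceptual---once $|A|$ is known to share the eigenbasis, everything is immediate---while the paper's route is more computational but self-contained in the sense that it never needs to invoke the square root lemma or construct an auxiliary operator. The paper's approach also makes the basis-independence of the trace value visible as a byproduct of the calculation, whereas you import that fact from Proposition~\ref{trace class well-defined trace}.
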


\begin{proof}
    Let $\set{\varphi_n}_{n=1}^N \subset \mathcal{H}$ be an orthonormal basis. For each $n,m \in \N$, let \\$a_m^n := \langle e_m, \varphi_n \rangle$ so that $\varphi_n = \sum_{m=1}^N \langle e_m, \varphi_n \rangle e_m = \sum_{m=1}^N a_m^n e_m$ by expanding $\varphi_n$ in terms of the orthonormal basis $\set{e_m}$. Then
    \begin{equation*}
    \begin{split}
    \sum_{n=1}^N \langle \varphi_n, A \varphi_n \rangle & = \sum_n \left\langle \varphi_n,\: A \left(\sum_{m=1}^N a_m^n e_m\right) \right\rangle  \\
    & = \sum_n \left\langle \varphi_n,\:  \sum_{m} a_m^n A e_m \right\rangle\\
    & = \sum_n \left\langle \varphi_n,\:  \sum_{m} a_m^n \lambda_m e_m \right\rangle.
    \end{split}
    \end{equation*}
Where the second line follows from the fact that $A$ is a linear and bounded. Since $\langle \cdot, \; \cdot \rangle$ is linear and continuous in the second argument, we get

\begin{equation*}
\begin{split}
\sum_{n} \langle \varphi_n, A \varphi_n \rangle & =  \sum_n  \sum_{m} \lambda_m  \left\langle \varphi_n,\: a_m^n e_m \right\rangle\\
& = \sum_n  \sum_{m} \lambda_m  \overline{\left\langle a_m^n e_m, \: \varphi_n \right\rangle}.
\end{split}
\end{equation*}
By expanding $\varphi_n$ once more and utilizing the fact that complex conjugation is continuous and distributes over sums, we achieve
\begin{equation*}
\begin{split}
\sum_{n} \langle \varphi_n, A \varphi_n \rangle & = \sum_n \sum_m \lambda_m \overline{  \left\langle a_m^n e_m, \sum_{j=1}^N a_j^n e_j    \right\rangle  }\\
& = \sum_n \sum_m \lambda_m \sum_j \overline{\langle a_m^n e_m, a_j^n e_j  \rangle}\\
& = \sum_n \sum_m \lambda_m \sum_j \langle a_j^n e_j, a_m^n e_m   \rangle.
\end{split}
\end{equation*}
Note that $\langle a_j^n e_j, a_m^n e_m \rangle = a_m^n \cdot \overline{a_j^n} \langle e_j, e_m \rangle$, so $\langle a_j^n e_j, a_m^n e_m  \rangle = 0$ if $j \neq m$, and \\$\langle a_m^n e_m, a_m^n e_m \rangle = a_m^n \cdot \overline{a_m^n} = |a_m^n|^2$ since $\set{e_m}$ is an orthonormal basis. Thus,
\begin{equation*}
\begin{split}
\sum_n \langle \varphi_n, A\varphi_n  \rangle & = \sum_n \sum_m \lambda_m |a_m^n|^2.
\end{split}
\end{equation*}
By plugging in $\langle e_m, \varphi_n \rangle$ for $a_m^n$, we get
$$\sum_n \langle \varphi_n, A\varphi_n  \rangle = \sum_n \sum_m \lambda_m |\langle e_m, \varphi_n  \rangle  |^2.$$
By Parseval's Identity (applied with respect to the basis $\set{\varphi_n}$) 
\begin{equation*}
\begin{split}
\sum_n \sum_m \abs{\lambda_m \cdot \abs{\langle e_m, \varphi_n  \rangle}^2 } & = \sum_m \sum_n \abs{\lambda_m} \cdot \abs{\langle e_m, \varphi_n  \rangle}^2\\
& = \sum_m \abs{\lambda_m} \sum_n \abs{\langle e_m, \varphi_n  \rangle}^2\\
& = \sum_m \abs{\lambda_m} \cdot \norm{e_m}^2\\
& = \sum_m |\lambda_m| \\
& < \infty \; \text{ (By assumption)}.
\end{split}
\end{equation*}
Thus, $\sum_n \langle \varphi_n, A \varphi_m  \rangle$ converges absolutely, so we can swap the order of the sums above and apply Parseval's Identity once more to get
\begin{equation*}
\begin{split}
\sum_n \langle \varphi_n, A \varphi_n  \rangle & = \sum_n \sum_m \lambda_m \abs{\langle e_m, \varphi_n  \rangle  }^2\\
& = \sum_m \lambda_m \sum_n \abs{\langle e_m, \varphi_n  \rangle  }^2\\
& = \sum_m \lambda_m \norm{e_m}^2\\
& = \sum_m \lambda_m.
\end{split}
\end{equation*}
Since the basis $\set{\varphi_n}$ was arbitrary, the proof is complete by proposition \ref{trace class equivalency}.

\end{proof}

\section{The Inverse Laplace Operator} 
The purpose of this section is to define the inverse Laplace operator, which we will be studying in the final section within the context of traces. The following notations will be fixed throughout the rest of this section.
\begin{itemize}
    \item $(M,g)$ will denote a smooth, oriented, closed, connected, $n$-dimensional Riemannian manifold.
    \item $C^\infty(M)$ is the space of smooth maps from $M$ to $\R$.
    \item $\Gamma_{C^\infty}(TM)$ will denote the space of smooth vector fields on $M$.
    \item $L^2_\R(M)$ will denote the real Hilbert space of real-valued square integrable  functions on $M$.\footnote{Integration on $M$ is with respect to the Borel measure induced by the Riemannian volume form on $M$.}
    \item $L^2_\C(M)$ will denote the complex Hilbert space of complex-valued square integrable functions on $M$.
\end{itemize}

For reference on general manifold language, see \cite{Manifolds 1}. 
\subsection{The Laplacian}

Recall the following definitions, all of which can be found in chapter 2 of \cite{Old and New Spectral Geom}.

\begin{defin}
The \textit{gradient} is the linear map \label{grad def}
$$\text{grad}: C^{\infty}(M) \to \Gamma_{C^{\infty}}(TM)$$
such that, for any $\varphi \in C^\infty(M)$ and any $p \in M$, $(\text{grad} \varphi)_p$ is the unique tangent vector at $p$ such that 
$$\left\langle X_p, (\text{grad}\varphi)_p \right\rangle_{g(p)} = X_p(\varphi)$$
for all tangent vectors $X_p$ at $p.$
\end{defin}

In definition \ref{grad def} above, $\langle \cdot, \cdot \rangle_{g(p)}$ is the inner product on the tangent space at $p \in M$ induced by the metric $g$.

\begin{defin}
The \textit{divergence} operator is the  linear map
$$\text{div}: \Gamma_{C^\infty}(TM) \to C^\infty(M)$$
given by
$$\text{div}(X) = \text{trace}\left( \nabla X \right)$$
for all $X \in \Gamma_{C^\infty}(TM)$ where $\nabla X$ is the covariant derivative of $X$.
\end{defin} 

\begin{defin}
The \textit{Laplace operator} (or the Laplacian) on $M$ is the linear operator
$$\Delta: C^\infty(M) \to C^\infty(M)$$
given by 
\begin{equation*}
\Delta = \text{div} \circ \text{grad}.\footnote{Often, the Laplacian is defined as the negation of how we have defined it here. That is, it is typical to define $\Delta_M:= - \text{div} \circ \text{grad}$. Doing so makes $\Delta_M$ a positive operator.}
\end{equation*}   \label{Laplace def}
\end{defin}
A subscript will be added to $\Delta$ whenever we would like to emphasize the manifold in which it is acting on. For example, $\Delta_M:C^\infty(M) \to C^\infty(M)$.

The previous three definitions given all agree with the definitions one would encounter in a multivariable calculus course when $M = \R^n$. When $M \neq \R^n$, the Laplacian takes a more familiar form in terms of partial derivatives when utilizing local coordinates. For any chart $(U, x_1,...,x_n)$ of $M$, we have

\begin{equation}
\Delta f \big \vert_U = \dfrac{1}{\sqrt{\det G}} \sum_{i,j = 1}^n \dfrac{\del}{\del x_i} \left( g^{ij} \sqrt{\det G} \: \dfrac{\del f}{\del x_j} \right) \label{local coords laplace}  
\end{equation}
for each $f \in C^\infty(M)$. Here, $g^{ij}$ is the entry in the $i^{th}$ row and $j^{th}$ column of $G^{-1}$, where $G$ denotes the metric tensor matrix in the given local coordinates.  
\begin{exa}
Let's take $M = S^{1}$. Throughout this paper, define $S^1 := \R/2\pi\Z$. Let $q:\R \to S^1 = \R/2\pi\Z$ be the quotient map $q(x) = [x]$ where $[x]$ denotes the equivalence class of $x$ in $S^1$. The restrictions $q \big \vert_{(0,2\pi)}: (0,2\pi) \to (0,2\pi)$ (where the codomain is viewed as a subspace $(0,2\pi) \subset S^1$) and $q\big \vert_{(-\pi, \pi)}: (-\pi, \pi) \to S^1 \setminus \set{[\pi]}$ are invertible. These inverses are the coordinate functions for charts that cover $S^1$, so by definition of a smooth map, $f:S^1 \to \R$ is smooth if and only if $f \circ q \big \vert_{(0,2\pi)}$ and $f \circ q \big \vert_{(-\pi,\pi)}$ are smooth. Since $f \circ q$ is $2\pi$-periodic, $f \circ q \big \vert_{(0,2\pi)}$ and $f \circ q \big \vert_{(-\pi,\pi)}$ are smooth if and only if $f \circ q$ is smooth, so we have 
\begin{equation}
C^\infty(S^1) = \set{f:S^1 \to \R \: \bigg \vert \: f \circ q \in C^\infty(\R)}.\label{categorizing all smooth maps on circle}    
\end{equation}

Let $\frac{d}{dx}:C^\infty(\R) \to C^\infty(\R)$ denote the derivative map between smooth functions on $\R$. We claim that for every $f \in C^\infty(S^1)$ there exists a unique map $h \in C^\infty(S^1)$ such that $\frac{d(f \circ q)}{dx} = h \circ q$. Since $f \circ q$ is $2\pi$-periodic, $(f \circ q)(x + 2 \pi m) = (f \circ q)(x)$ for all $x \in \R$ and $m \in \Z$. Taking the derivative of both sides yields
$$\frac{d(f \circ q)}{dx}(x + 2\pi m) = \frac{d(f\circ q)}{dx}(x)$$
by the chain rule. That is, $\frac{d(f \circ q)}{dx}$ is $2\pi$-periodic as well. Hence, the map $h:S^1 \to \R$ given by 
$h\left([x]\right) = \frac{d(f \circ q)}{dx}(x)$ is well-defined, and we have $h \circ q = \frac{d(f \circ q)}{dx}$ as desired. Smoothness of $h$ follows from (\ref{categorizing all smooth maps on circle}) and the fact that $\frac{d(f \circ q)}{dx}$ is smooth. The uniqueness of $h$ is immediate from that fact that $k = \widetilde{k}$ if and only if $k \circ q = \widetilde{k} \circ q$ for any two maps $k,\widetilde{k}:S^1 \to \R$.

Let $\frac{d}{d\theta}:C^\infty(S^1) \to C^\infty(S^1)$ denote the map assigning $f \mapsto h$ as above. That is,
$$\frac{d(f \circ q)}{dx} = \frac{df}{d \theta} \circ q.$$
$\frac{d}{d \theta}$ is the derivative on $\R$ descended to $S^1$. With respect to a chart given as the inverse of $q$ when appropriately restricted, the canonical metric $g$ on $S^1$ is such that $G = 1$. Hence, it follows from (\ref{local coords laplace}) that
$$\Delta_{S^1} = \frac{d^2}{d\theta^2}.$$
That is, the Laplace on $S^1$ according to definition \ref{Laplace def} is exactly as we would expect it to be defined; it is the Laplace on $\R$ descended to $S^1$ in the sense that
$$\Delta_{\R}\left(f \circ q \right) = \Delta_{S^1}(f) \circ q.$$ \label{S^1 laplace}
\end{exa}

\begin{exa}
Let's now consider $M = S^1 \times S^1$. Let $q \times q: \R^2 \to S^1 \times S^1$ denote the map $(x_1,x_2) \mapsto (q(x_1),q(x_2))$ where, once again, $q:\R \to S^1$ is the quotient map. By an argument similar to that in the last example,
$$C^\infty(S^1 \times S^1) = \set{f: S^1 \times S^1 \to  \R \: \bigg \vert \: f \circ (q \times q) \in C^\infty\left(\R^2\right)}.$$
Let $\frac{\del}{\del x_1}$ and $\frac{\del}{\del x_2}$ denote the partial derivatives on $\R^2$ with respect to the first and second arguments respectively. Similar to the previous example, there are unique maps $\frac{\del}{\del \theta_1}, \frac{\del}{\del \theta_2}: C^\infty(S^1 \times S^1) \to C^\infty(S^1 \times S^1)$ which descend the partial derivatives on $\R^2$ to $S^1 \times S^1$ in the sense that 
$$\frac{\del\left(f \circ (q\times q)\right)}{\del x_1} = \frac{\del f}{\del \theta_1} \circ (q \times q)$$
and
$$\frac{\del\left(f \circ (q\times q)\right)}{\del x_2} = \frac{\del f}{\del \theta_2} \circ (q \times q)$$
for all $f \in C^\infty(S^1 \times S^1)$.
With respect to a chart given as the inverse of $q \times q$ when appropriately restricted, $S^1 \times S^1$ is ``flat'' in the sense that $g_{ij} = \delta_{ij}$ where $\delta_{ij}$ is the Kronecker delta. Hence, 
$$\Delta_{S^1 \times S^1} = \frac{\del^2}{\del \theta_1^2} + \frac{\del^2}{\del \theta_2^2}.$$
One can easily check that
$$\Delta_{\R^2}\left(f \circ (q \times q) \right) = \Delta_{S^1 \times S^1}(f) \circ (q \times q).$$\\ \label{torus laplace}
\end{exa}

Since we have assumed that $M$ is closed, the Laplacian is formally self-adjoint in the following sense.

\begin{prop}
(Green's Identity) Let $f,\varphi \in C^{\infty}(M)$. Then
$$\int_M f \cdot \Delta \varphi = \int_M \langle \text{grad}f, \text{grad}\varphi  \rangle_g = \int_M \varphi \cdot \Delta f.$$\label{green's identity}
\end{prop}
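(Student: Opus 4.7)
The plan is to derive Green's identity from the divergence theorem on a closed manifold, combined with a Leibniz-type product rule for the divergence operator. First, I would establish the pointwise identity
$$\text{div}(f \cdot X) = f \cdot \text{div}(X) + \langle \text{grad}f, X \rangle_g$$
for all $f \in C^\infty(M)$ and $X \in \Gamma_{C^\infty}(TM)$. This follows from the Leibniz rule for the covariant derivative, $\nabla(fX) = df \otimes X + f \cdot \nabla X$, linearity of the trace, and the defining property $X(f) = \langle X, \text{grad}f \rangle_g$ of the gradient from Definition \ref{grad def}, which together give $\text{trace}(df \otimes X) = X(f) = \langle \text{grad}f, X \rangle_g$.

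Next, I would specialize to $X = \text{grad}\varphi$, obtaining
$$\text{div}(f \cdot \text{grad}\varphi) = f \cdot \Delta\varphi + \langle \text{grad}f, \text{grad}\varphi \rangle_g.$$
Integrating over $M$ and invoking the divergence theorem gives $\int_M \text{div}(f \cdot \text{grad}\varphi) = 0$, since $M$ is closed. This rearranges to the first equality in the statement (the sign convention chosen for $\Delta$ in Definition \ref{Laplace def} being understood throughout). The second equality is then essentially free: repeating the same derivation with $f$ and $\varphi$ interchanged and exploiting the symmetry of the Riemannian inner product $\langle \cdot, \cdot\rangle_g$ shows that $\int_M \varphi \cdot \Delta f$ equals the same middle integral $\int_M \langle \text{grad}f, \text{grad}\varphi\rangle_g$.

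The main obstacle is justifying the divergence theorem on an oriented, closed Riemannian manifold, which has not been developed in the paper itself. The cleanest route is to identify $\text{div}(X) \cdot dV_g$ with the exact $n$-form $d(\iota_X dV_g)$ on $M$, and then apply Stokes' theorem: since $\partial M = \emptyset$, the integral of any exact form over $M$ vanishes. The identification $\text{div}(X) \cdot dV_g = d(\iota_X dV_g)$ itself can be verified either via Cartan's formula (noting that the Lie derivative of $dV_g$ along $X$ equals $\text{div}(X) \cdot dV_g$) or by a direct computation in local coordinates using the formula (\ref{local coords laplace}) and the expression $dV_g = \sqrt{\det G}\, dx_1 \wedge \cdots \wedge dx_n$. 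Both verifications are standard and can be cited from a reference such as \cite{Manifolds 1} or \cite{Old and New Spectral Geom}, so the argument remains self-contained modulo background from smooth manifold theory.
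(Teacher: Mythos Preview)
Your argument is correct and is precisely the standard derivation of Green's identity on a closed Riemannian manifold: the Leibniz rule for the divergence combined with the divergence theorem (equivalently, Stokes' theorem applied to $d(\iota_X dV_g)$). The paper itself does not supply a proof of this proposition at all; it simply cites \cite{Old and New Spectral Geom}. What you have written is essentially the argument one finds in that reference, so there is no meaningful divergence in approach to compare. Your parenthetical remark about the sign convention is apt: with $\Delta = \text{div}\circ\text{grad}$ as in Definition~\ref{Laplace def}, the identity $\text{div}(f\,\text{grad}\varphi) = f\Delta\varphi + \langle \text{grad}f,\text{grad}\varphi\rangle_g$ integrates to $\int_M f\Delta\varphi = -\int_M \langle \text{grad}f,\text{grad}\varphi\rangle_g$, so the statement as written in the paper carries an implicit sign discrepancy that you were right to flag rather than silently absorb.
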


A proof of proposition \ref{green's identity} can be found in \cite{Old and New Spectral Geom}. The term formally self-adjoint comes from realizing that the map $(f,\varphi) \mapsto \int_{M} f\cdot \varphi$ is the inner product for $L_\R^2(M)$, so if $\Delta$ was an operator on $L_\R^2(M)$, this formula would imply that $\Delta$ is its own adjoint.

\subsection{Defining the Inverse Laplace Operator}

For $u \in C^\infty(M)$ and $j \in \N_0$, let $\nabla^j u$ denote the $j^{th}$ covariant derivative of $u$ where, by convention, $\nabla^0 u = u$ and $\nabla^1 u = \nabla u$. One can think of the covariant derivatives of a function as being coordinate independent presentations of partial derivatives. That is, $\nabla u$ can be thought of as containing information regarding the first partial derivatives, $\nabla^2 u$ contains the second partial derivatives, and so on.

\begin{defin}[Sobolev Space]
    $H^k(M)$ is the real Hilbert space given as the Cauchy completion of $C^{\infty}(M)$ equipped with the inner product
    $$\langle u,v \rangle_{H^k}: = \sum_{j=0}^k \int_M \langle \nabla^j u, \nabla^j v \rangle_g$$
    for $u,v \in C^\infty(M)$ where $\langle \cdot, \cdot \rangle_g$ is the pointwise inner product on covariant tensor fields induced by the metric $g$. $H^k(M)$ is the $k^{th}$ \textit{Sobolev space} of $M$.
\end{defin}

By convention, $\langle u, v \rangle_g = u \cdot v$. For $j = 1$, note that  $\langle \nabla u, \nabla v \rangle_g = \langle \text{grad}u, \text{grad}v \rangle_g$, so for $u,v \in C^\infty(M)$
\begin{equation*}
\begin{split}
\langle u, v \rangle_{H^1} & = \int_M u \cdot v  + \int_M \langle \nabla u, \nabla v \rangle_g\\
& = \int_M u \cdot v  + \int_M \langle \text{grad}(u), \text{grad}(v) \rangle_g. 
\end{split}
\end{equation*}
Hence, proposition \ref{green's identity} implies that  
\begin{equation}
\begin{split}
\langle u, v \rangle_{H^1} & = \int_M u \cdot v + \int_M \Delta u \cdot v\\
& = \int_M u \cdot v + \int_M u \cdot \Delta v\label{ideal sobolev inner product smooth}  
\end{split}
\end{equation}
for smooth functions $u$ and $v$.

Note that a sequence of smooth functions $\set{f_n}_{n=1}^\infty \subset C^\infty(M)$ that is Cauchy with respect the norm on $H^k(M)$ is also Cauchy in $L^2_\R(M)$. In particular, $\set{f_n}$ converges to some $f \in L^2_\R(M)$ with respect to the norm on $L^2_\R(M)$. Also, a Cauchy sequence that converges to $0$ in $L_\R^2(M)$ also converges to $0$ in $H^k(M)$, so the map $H^k(M) \hookrightarrow L^2_\R(M)$ given by sending $\big[\set{f_n}_n\big] \in H^k(M)$ to its limit $f$ in $L^2_\R(M)$ is a well-defined, linear injection. Hence, we identify $H^k(M)$ with its image in $L^2_\R(M)$ under this map. A more detailed description on this identification can be found in \cite{Old and New Spectral Geom}.

Now, define the bilinear map $B:C^\infty(M) \times C^\infty(M) \to \R$ by 
$$B(u,v) = \int_M \Delta u \cdot v.$$
As explained in \cite{Old and New Spectral Geom} $B$ extends to a symmetric bilinear map $B:H^1(M) \times H^1(M) \to \R$ by defining
$$B(u,v) = \underset{k \to \infty}{\lim} B(u_k, v_k)$$
for sequences of smooth functions $\set{u_k}_{k=1}^{\infty}$ and $\set{v_k}_{k=1}^\infty$ converging to $u$ and $v$ respectively in $H^1(M)$. It follows from this definition that the extension of $B$ is continuous.

Note that for any $u \in H^k(M)$ we have 
\begin{equation}
\norm{u}_{L^2} \leq \norm{u}_{H^k} \label{compare L^2 and H^k}
\end{equation}
since this inequality holds for smooth functions. As a corollary, the inclusion \\$H^k(M) \hookrightarrow L^2(M)$ is continuous. As a second corollary, the map $H^1(M) \times H^1(M) \to \R$ sending 
$$(u,v) \mapsto \int_M u \cdot v$$
is continuous since, by the Cauchy--Schwarz inequality, we have
$$\left| \int_M u \cdot v \right | = \left | \langle u,v  \rangle_{L^2}   \right | \leq \norm{u}_{L^2} \cdot \norm{v}_{L^2} \leq \norm{u}_{H^1} \cdot \norm{v}_{H^1}. \footnote{Here, continuity is respect to the product topology on $H^1(M)\times H^1(M)$. Continuity of a bilinear form $B$ is equivalent to it being bounded: there exists some $c > 0$ such that \\$\abs{B(u,v)} \leq c \norm{u} \cdot \norm{v}$ for all $u$ and $v$.}
$$
Thus, it follows that for any $u,v \in H^1(M)$,
\begin{equation}
\begin{split}
\langle u,v  \rangle_{H^1} & = \underset{k \to \infty}{\lim} \langle u_k,v_k  \rangle_{H^1}\\\label{sobolev inner product final}
& = \underset{k \to \infty}{\lim} \left( \int_M u_k \cdot v_k  + \int_M \Delta u_k \cdot v_k \right)\\
& = \underset{k \to \infty}{\lim} \int_M u_k \cdot v_k  + \underset{k \to \infty}{\lim} B(u_k,v_k )\\
& = \int_M u \cdot v + B(u,v),
\end{split}
\end{equation}
where once again $\set{u_k}_k$ and $\set{v_k}_k$ are sequences of smooth functions converging to $u$ and $v$ with respect to the $H^1$ norm. Equation (\ref{sobolev inner product final}) generalizes (\ref{ideal sobolev inner product smooth}) in the sense that they agree on $C^\infty(M) \times C^\infty(M)$.

Utilizing its eigenfunctions, the Laplace operator admits an extension to a linear map $\Delta: H^2(M) \to L^2_\R(M)$ called the Friedrich extension: There exist eigenfunctions $\set{\varphi_j}_{j=1}^\infty \subset C^\infty(M)$ of $\Delta$ (with eigenvalues $\set{\lambda_j}_{j=1}^\infty$) that form an orthonormal basis for $L^2_\R(M)$. For $f = \sum_{j=1}^\infty \langle \varphi_j, f\rangle_{L^2} \cdot \varphi_j \in H^2(M)$ the Friedrich extended Laplacian $\Delta f$ is defined as
$$\Delta f = \sum_{j=1}^\infty \lambda_j \langle \varphi_j, f\rangle_{L^2} \cdot \varphi_j.$$ See \cite{Old and New Spectral Geom} for more details on the extension. Of particular interest to us is the following proposition. 

\begin{prop}
    For any $u \in H^2(M)$ and $v \in H^1(M)$\label{connecting B and Laplace on H^2}
    $$B(u,v) = \int_M \Delta u \cdot v $$
\end{prop}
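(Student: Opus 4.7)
The plan is to reduce to the smooth case by approximation, using the continuous extension of $B$ and the continuity of the Friedrich extension of $\Delta$ as a map $H^2(M) \to L^2_\R(M)$.

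First, I would choose sequences of smooth functions $\set{u_k}_{k=1}^\infty, \set{v_k}_{k=1}^\infty \subset C^\infty(M)$ with $u_k \to u$ in $H^2(M)$ and $v_k \to v$ in $H^1(M)$. Because the norm on $H^2$ dominates the norm on $H^1$, we also have $u_k \to u$ in $H^1(M)$. Since $B$ was constructed precisely as the continuous extension from $C^\infty(M) \times C^\infty(M)$ to $H^1(M) \times H^1(M)$, it follows that
\[
B(u,v) = \lim_{k \to \infty} B(u_k, v_k) = \lim_{k \to \infty} \int_M \Delta u_k \cdot v_k,
\]
where the second equality is the definition of $B$ on smooth pairs.

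Next, I would show that the right-hand side converges to $\int_M \Delta u \cdot v$. Writing
\[
\int_M \Delta u_k \cdot v_k - \int_M \Delta u \cdot v = \int_M \Delta u_k \cdot (v_k - v) + \int_M (\Delta u_k - \Delta u) \cdot v,
\]
and applying Cauchy--Schwarz on $L^2_\R(M)$, we obtain
\[
\left| \int_M \Delta u_k \cdot v_k - \int_M \Delta u \cdot v \right| \leq \norm{\Delta u_k}_{L^2} \norm{v_k - v}_{L^2} + \norm{\Delta u_k - \Delta u}_{L^2} \norm{v}_{L^2}.
\]
Now $\norm{v_k - v}_{L^2} \to 0$ by the continuous inclusion $H^1(M) \hookrightarrow L^2_\R(M)$ noted in (\ref{compare L^2 and H^k}), and $\norm{\Delta u_k}_{L^2}$ is bounded because convergent sequences are bounded. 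So the first term vanishes as soon as $\Delta u_k \to \Delta u$ in $L^2_\R(M)$.

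The crux, then, is justifying $\Delta u_k \to \Delta u$ in $L^2_\R(M)$, i.e.\ the continuity of the Friedrich extension $\Delta \colon H^2(M) \to L^2_\R(M)$. I expect this to be the main obstacle, but it can be handled via the eigenfunction description used to define the extension: for a smooth $w$, expanding $w = \sum_j \langle \varphi_j, w\rangle_{L^2} \varphi_j$ in the orthonormal eigenbasis, Parseval's identity gives
\[
\norm{\Delta w}_{L^2}^2 = \sum_{j=1}^\infty \lambda_j^2 \abs{\langle \varphi_j, w\rangle_{L^2}}^2,
\]
and one needs the comparison $\sum_j \lambda_j^2 |\langle \varphi_j, w\rangle|^2 \leq C \norm{w}_{H^2}^2$. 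This comparison follows either from the pointwise bound $(\Delta w)^2 \leq n \lvert \nabla^2 w\rvert_g^2$ (since $\Delta w$ is the metric trace of the covariant Hessian, so Cauchy--Schwarz in the fiber gives the estimate) together with integration over $M$, or alternatively from the reference \cite{Old and New Spectral Geom} where $H^2(M)$ is characterized via this eigenfunction weighted summability. Once this bound is in hand, $\Delta$ extends continuously to $H^2(M)$, agrees with the Friedrich extension by density, and gives $\Delta u_k \to \Delta u$ in $L^2_\R(M)$. Combining everything yields $B(u,v) = \int_M \Delta u \cdot v$, completing the proof.
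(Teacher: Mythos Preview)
The paper does not actually prove this proposition; it simply cites \cite{Old and New Spectral Geom} and moves on. So there is no ``paper's own proof'' to compare against here.

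Your argument is correct and is the standard density-plus-continuity approach. The only place a referee might push back is the sentence ``agrees with the Friedrich extension by density'': you have shown that the classical $\Delta$ on $C^\infty(M)$ is bounded into $L^2$ (via the pointwise trace estimate $|\Delta w|^2 \le n\,|\nabla^2 w|_g^2$), hence admits a unique continuous extension $\widetilde\Delta\colon H^2(M)\to L^2_\R(M)$; to identify $\widetilde\Delta$ with the Friedrich extension you can simply test against the eigenbasis. Indeed, for smooth $u_k$ Green's identity gives $\langle \varphi_j,\Delta u_k\rangle_{L^2}=\lambda_j\langle\varphi_j,u_k\rangle_{L^2}$, and passing to the limit (using $\Delta u_k\to\widetilde\Delta u$ in $L^2$ and $u_k\to u$ in $L^2$) yields $\langle\varphi_j,\widetilde\Delta u\rangle_{L^2}=\lambda_j\langle\varphi_j,u\rangle_{L^2}$ for every $j$, which forces $\widetilde\Delta u=\sum_j\lambda_j\langle\varphi_j,u\rangle_{L^2}\,\varphi_j$. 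With that one line added, the proof is complete. The reference you cite likely proceeds via the equivalent eigenfunction description of the $H^2$-norm, $\norm{u}_{H^2}^2\asymp\sum_j(1+\lambda_j)^2|\langle\varphi_j,u\rangle|^2$, from which continuity of the Friedrich extension is immediate; your Hessian-trace route is more elementary in that it avoids invoking that equivalence.
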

A proof of proposition \ref{connecting B and Laplace on H^2} can be found in \cite{Old and New Spectral Geom}. 

Our goal is now to utilize the bilinear form $B$ in order to define the inverse Laplace operator. Let $K \subset H^1(M)$ be the linear subspace
\begin{equation}
K: = \set{u \in H^1(M) \; : \; B(u,v) = 0 \; \text{ for all } v \in H^1(M)}.\label{definition of K}    
\end{equation} 
We will define the inverse Laplacian, denoted by $D^{-1}$, such as to satisfy the equality 
$$B\left(D^{-1}(h), v\right) = \int_M h \cdot v$$
for all $v \in K^\perp$, where $K^\perp$ is the orthogonal compliment of $K$ in $H^1(M)$. 

In particular, if $D^{-1}(h) \in H^2(M)$, then the equality above implies that
$$\int_M \Delta\left(D^{-1}(h)\right) \cdot v = \int_M h \cdot v,$$
showing that $D^{-1}$ is ``distributionally'' a right-sided inverse to $\Delta$ on the preimage of $H^2(M)$ under $D^{-1}$ with test functions in $K^\perp$. We first present a couple necessary results.

\begin{prop}
    Let $H$ be a Hilbert space and $C \subset H$ a closed, linear subspace. $H/C \cong C^\perp$ isometrically. \label{isometric prop}
\end{prop}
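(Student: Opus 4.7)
The plan is to exhibit an explicit isometric isomorphism using the orthogonal decomposition $H = C \oplus C^\perp$, which is available because $C$ is closed. First I would recall that every $x \in H$ admits a unique decomposition $x = c + c'$ with $c \in C$ and $c' \in C^\perp$, and then define a map $\Phi: C^\perp \to H/C$ by $\Phi(c') = [c']$, where $[c']$ denotes the coset $c' + C$. Linearity of $\Phi$ is immediate from the linearity of the quotient map.

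Next I would verify that $\Phi$ is a bijection. For surjectivity, given $[x] \in H/C$, write $x = c + c'$ with $c \in C$ and $c' \in C^\perp$; then $[x] = [c'] = \Phi(c')$. For injectivity, if $\Phi(c') = 0$, then $c' \in C$, but $c' \in C^\perp$ as well, so $c' \in C \cap C^\perp = \set{0}$.

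The main content is the isometry claim. Recall that the quotient norm is $\norm{[x]}_{H/C} = \inf_{c \in C} \norm{x - c}_H$. For $c' \in C^\perp$ and any $c \in C$, we have $c' - c = c' + (-c)$ with $c' \in C^\perp$ and $-c \in C$, so by the Pythagorean identity
\begin{equation*}
\norm{c' - c}_H^2 = \norm{c'}_H^2 + \norm{c}_H^2 \geq \norm{c'}_H^2,
\end{equation*}
with equality when $c = 0$. Hence $\norm{\Phi(c')}_{H/C} = \inf_{c \in C} \norm{c' - c}_H = \norm{c'}_H$, which gives the desired isometry.

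I do not expect any genuine obstacle here; the only subtlety worth flagging is that one must use closedness of $C$ to guarantee the orthogonal decomposition $H = C \oplus C^\perp$ (equivalently, that the projection onto $C$ exists), since this is exactly what makes $H/C$ a Hilbert space and the quotient norm a genuine norm rather than a seminorm. Everything else reduces to unpacking the definitions of the quotient norm and the orthogonal complement.
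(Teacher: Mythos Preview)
Your proof is correct and follows essentially the same approach as the paper: both define the same map $C^\perp \to H/C$ sending $c' \mapsto c' + C$, and verify bijectivity via the decomposition $H = C \oplus C^\perp$ in the same way. The only minor difference is in the isometry step: you use the Pythagorean identity directly to show $\norm{c' - c}^2 = \norm{c'}^2 + \norm{c}^2$ is minimized at $c = 0$, whereas the paper invokes the characterization of the orthogonal projection as the closest point in $C$; your argument is slightly more direct but the content is the same.
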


\begin{proof}
First, recall that $H/C$ is a normed vector space with 
$$\norm{x + C} = \underset{u \in C}{\inf} \norm{x - u}$$
for all $x + C \in H/C$. Define the linear map $T: C^\perp \to H/C$ by 
$$T(x) = x + C$$ 
for all $x \in C^\perp.$ If $T(x) = 0$, then $x+C = 0$, implying that $x \in C$, so $x \in C \cap C^\perp \implies x = 0$, which shows that $T$ is injective. Now suppose $y + C \in H/C$. To show surjectivity, recall that $H = C \oplus C^\perp$, so $y = u + v$ for some $u \in C$ and $v \in C^\perp$. In particular, $y + C = v + C$. Hence, $T(v) = v + C = y + C$ showing that $T$ is surjective.

To prove that $T$ is an isometry, we must show that $\norm{x} = \underset{u \in C}{\inf }\norm{x - u}$ for all $x \in C^\perp.$ Once again, the decomposition $H = C \oplus C^\perp$ implies that $x = u_0 + v_0$ for some $u_0 \in C$ and $v_0 \in C^\perp$. In particular,  $u_0$ and $v_0$ are the unique vectors in $C$ and $C^\perp$ respectively satisfying $\norm{x - u_0} = \underset{u \in C}{\inf }\norm{x - u}$ and $\norm{x - v_0} = \underset{v \in C^\perp}{\inf }\norm{x - v}$. Since $x \in C^\perp$, we can take $v = x$ to get 
$$\norm{x - v_0} = \underset{v \in C^\perp}{\inf }\norm{x - v} = \norm{x-x} = 0,$$ so $v_0 = x$. It follows from the equation $x = u_0 + v_0$ that $u_0 = 0$. Therefore, 
$$\norm{x} = \norm{x - u_0} = \underset{u \in C}{\inf }\norm{x - u}$$
\end{proof}

\begin{thm}[Lax--Milgram] Let $\mathcal{H}$ be a Hilbert space and $B:\mathcal{H} \times \mathcal{H} \to \R$ a continuous bilinear map that is coercive. By coercive, we mean there exists some $c > 0$ such that for all $u \in \mathcal{H}$ it holds that $B(u,u) \geq c \norm{u}^2$. Then for any bounded linear functional $f: \mathcal{H} \to \R$, there exists a  unique $u_f \in \mathcal{H}$ so that 
$$B(u_f,v) = f(v)$$
for all $v \in \mathcal{H}$. Furthermore, it holds that 
$$\norm{u_f} \leq \dfrac{1}{c}\norm{f}.$$
\end{thm}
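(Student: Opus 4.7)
The plan is to reduce the theorem to the invertibility of a suitable bounded operator on $\mathcal{H}$. First, for each fixed $u \in \mathcal{H}$, the map $v \mapsto B(u,v)$ is a bounded linear functional (continuity of $B$ gives a constant $M > 0$ with $\abs{B(u,v)} \leq M \norm{u} \norm{v}$), so the Riesz representation theorem produces a unique $Au \in \mathcal{H}$ satisfying $B(u,v) = \langle Au, v \rangle$ for every $v \in \mathcal{H}$. The uniqueness clause of Riesz makes $u \mapsto Au$ linear, and substituting $v = Au$ into $\abs{B(u,v)} \leq M \norm{u} \norm{v}$ gives $\norm{Au} \leq M \norm{u}$, so $A \in L(\mathcal{H})$.

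Coercivity then bounds $A$ from below: by the Cauchy--Schwarz inequality,
$$c \norm{u}^2 \leq B(u,u) = \langle Au, u \rangle \leq \norm{Au} \cdot \norm{u},$$
hence $\norm{Au} \geq c \norm{u}$ for all $u$. This makes $A$ injective and, because Cauchy sequences in $\mathrm{Range}(A)$ pull back to Cauchy sequences in $\mathcal{H}$ via the same inequality, it also makes $\mathrm{Range}(A)$ closed. To rule out a non-trivial orthogonal complement, suppose $w \in \mathrm{Range}(A)^\perp$; then $B(w,w) = \langle Aw, w \rangle = 0$, and coercivity forces $w = 0$. Therefore $\mathrm{Range}(A)^\perp = \set{0}$, and combined with closedness of the range this yields $\mathrm{Range}(A) = \mathcal{H}$, so $A$ is a bijection onto $\mathcal{H}$.

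Given a bounded linear $f: \mathcal{H} \to \R$, Riesz provides $y_f \in \mathcal{H}$ with $f(v) = \langle y_f, v \rangle$ and $\norm{y_f} = \norm{f}$. Setting $u_f := A^{-1} y_f$ gives $B(u_f, v) = \langle A u_f, v \rangle = \langle y_f, v \rangle = f(v)$ for every $v \in \mathcal{H}$. For uniqueness, if $u_f'$ is another such element then $B(u_f - u_f', v) = 0$ for all $v$, and taking $v = u_f - u_f'$ together with coercivity forces $u_f = u_f'$. Finally, substituting $v = u_f$ into the identity $B(u_f, v) = f(v)$ yields
$$c \norm{u_f}^2 \leq B(u_f, u_f) = f(u_f) \leq \norm{f} \cdot \norm{u_f},$$
from which $\norm{u_f} \leq \norm{f}/c$.

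The main step is the surjectivity of $A$: the closed-range half is routine from the lower bound, but the critical leverage is that coercivity applies to \emph{every} vector of $\mathcal{H}$, in particular to any vector orthogonal to $\mathrm{Range}(A)$, which immediately collapses the orthogonal complement. Everything else is Hilbert space bookkeeping orchestrated by the Riesz representation theorem.
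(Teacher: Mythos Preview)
Your proof is correct and is essentially the standard argument (Riesz representation to produce a bounded operator $A$ with $B(u,v)=\langle Au,v\rangle$, coercivity to get a lower bound $\norm{Au}\geq c\norm{u}$ forcing injectivity and closed range, and coercivity again to kill $\mathrm{Range}(A)^\perp$). The paper does not supply its own proof of Lax--Milgram but simply refers the reader to Chapter~6 of Evans, where the argument is the same one you give.
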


See chapter 6 of \cite{Evans} for a proof of the Lax--Milgram theorem. 
Let \\$\widetilde{B}: K^\perp \times K^\perp \to \R$ be the restriction of the bilinear form $B$ to $K^\perp \times K^\perp$ where $K^\perp$ is the orthogonal compliment of $K$ in $H^1(M)$. It is a standard result that the orthogonal compliment of any subset of an inner product space is a closed subspace of said inner product space: Suppose $\set{u_k}_{k=1}^\infty \subset K^\perp$ converges to $u \in H^1(M)$. Then for any $v \in K$, the continuity of $\langle \cdot, \cdot \rangle_{H^1}$ in the second argument gives
$$\langle  v, u \rangle_{H^1} = \langle  v,  \underset{k \to \infty}{\lim}u_k \rangle_{H^1} = \underset{k \to \infty}{\lim} \langle v, u_k \rangle_{H^1} = 0,$$
implying that $u \in K^\perp$, showing $K^\perp \subset H^1(M)$ is closed. 

Hence, $K^\perp$ is a Hilbert space, so the Lax--Milgram theorem can be applied to $\widetilde{B}$ if we can show that it is continuous and coercive. 

\begin{prop}
The bilinear form $\widetilde{B}:K^\perp \times K^\perp \to \R$ is continuous and coercive.
\end{prop}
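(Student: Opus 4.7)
My plan is to first pin down $K$ and $K^\perp$ explicitly, then read off continuity directly from (\ref{sobolev inner product final}), and finally reduce coercivity to a Poincar\'e-type inequality on the mean-zero subspace via the spectral decomposition of $\Delta$.

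First I would show that $K$ is exactly the one-dimensional subspace of constant functions on $M$. The easy direction is that any constant $c$ lies in $K$: for smooth $v$, Green's identity gives $B(c,v) = \int_M \Delta c \cdot v = 0$, and by symmetry and continuity of $B$ this extends to all $v \in H^1(M)$. Conversely, if $u \in K$, then $B(u,u) = 0$; since $B$ coincides with $(u,v) \mapsto \int_M \langle \nabla u, \nabla v\rangle_g$ on $H^1(M)$ (its definition on smooth functions extends by continuity), this forces the weak gradient of $u$ to vanish, so connectedness of $M$ forces $u$ itself to be constant. Plugging any $c \in K$ into (\ref{sobolev inner product final}) yields $\langle u, c\rangle_{H^1} = c \int_M u$, so
$$K^\perp = \set{u \in H^1(M) \: : \: \int_M u = 0}.$$

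Continuity of $\widetilde{B}$ is then immediate: by (\ref{sobolev inner product final}), $B(u,v) = \langle u,v\rangle_{H^1} - \int_M u \cdot v$ on all of $H^1(M) \times H^1(M)$; the first term is continuous by definition of the $H^1$ inner product, and the second was shown to be continuous in the paragraph preceding (\ref{sobolev inner product final}) via Cauchy--Schwarz and (\ref{compare L^2 and H^k}). Restriction to $K^\perp \times K^\perp$ inherits this.

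For coercivity I would leverage the spectral picture introduced in the discussion of the Friedrich extension: the eigenfunctions $\set{\varphi_j}$ of $\Delta$ form an orthonormal basis of $L^2_\R(M)$, one eigenvalue is $0$ (with eigenspace exactly the constants), and all other eigenvalues are strictly positive; let $\mu > 0$ denote the smallest positive eigenvalue. Since $u \in K^\perp$ has zero mean, its component in the zero eigenspace vanishes, so
$$B(u,u) = \sum_{\lambda_j > 0} \lambda_j \abs{\langle \varphi_j, u\rangle_{L^2}}^2 \geq \mu \norm{u}_{L^2}^2,$$
which combined with $\norm{u}_{H^1}^2 = \norm{u}_{L^2}^2 + B(u,u)$ produces $\widetilde{B}(u,u) \geq \tfrac{\mu}{1+\mu}\norm{u}_{H^1}^2$, delivering coercivity. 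The main obstacle is justifying this spectral series for $B(u,u)$ on the completion $H^1(M)$ rather than merely on smooth functions; that step amounts to the spectral characterization $u \in H^1(M) \Leftrightarrow \sum_j (1+\lambda_j)\abs{\langle \varphi_j, u\rangle_{L^2}}^2 < \infty$, a standard consequence of the Friedrich extension already referenced in the excerpt that should be invoked rather than re-derived. An alternative avoiding spectral machinery is to derive the Poincar\'e inequality on the mean-zero subspace via a Rellich--Kondrachov compactness-plus-contradiction argument.
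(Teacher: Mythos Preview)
Your argument is correct, but the coercivity half takes a genuinely different route from the paper's. The paper never explicitly identifies $K$ or $K^\perp$; it only uses that the constant $\bar u$ lies in $K$, invokes Proposition~\ref{isometric prop} to get $\|u\|_{H^1} \le \|u - \bar u\|_{H^1}$ for $u \in K^\perp$, expands $\|u-\bar u\|_{H^1}^2 = \int_M (u-\bar u)^2 + B(u,u)$, and then cites the Poincar\'e inequality $\int_M (u-\bar u)^2 \le A\,B(u,u)$ from \cite{Sobolev Spaces} as a black box to conclude $\widetilde B(u,u) \ge \frac{1}{A+1}\|u\|_{H^1}^2$. You instead pin down $K^\perp$ explicitly as the mean-zero subspace and manufacture the Poincar\'e bound yourself from the spectral gap $\mu$, obtaining the sharp constant $\frac{\mu}{1+\mu}$. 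Your route is more self-contained relative to the Friedrich-extension machinery already set up in the paper, at the price of the $H^1$ spectral characterization you rightly flag as the one nontrivial step; the paper's route sidesteps that step by outsourcing Poincar\'e to a reference (and in turn needs the quotient-norm lemma, Proposition~\ref{isometric prop}, which your approach does not). One small caution: in the paper's sign convention the nonzero eigenvalues of $\Delta$ are \emph{negative} (cf.\ the $S^1$ example, where $\Delta_{S^1}e^{ik\theta}=-k^2e^{ik\theta}$), so the $\lambda_j$ in your spectral inequality should be read as eigenvalues of $-\Delta$; the substance of the argument is unaffected.
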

\begin{proof}
Recall that $B$ is continuous, so it follows immediately that $\widetilde{B}$ is continuous. Let's now show that $\widetilde{B}$ is coercive. Let $u \in K^\perp$. $K \subset H^1(M)$ is closed, so it follows from proposition \ref{isometric prop} that $K^\perp \cong 
H^1(M)/K $ isometrically. In particular, we have 
$$\norm{u}_{H^1} = \underset{v \in K}{\inf} \norm{u - v}_{H^1} \leq \norm{u - \overline{u}}_{H^1},$$
where $\overline{u} = \frac{1}{Vol(M)}\int_M u$ is the average value of $u$ over $M$ (a constant function). Let \\$\set{u_k}_k \subset C^\infty(M)$ be a sequence of smooth functions converging to $u$ in $H^1(M)$. Then $\set{u_k - \overline{u}}_k$ converges to $u - \overline{u}$, and 
\begin{equation*}
\begin{split}
  \norm{u}_{H^1}^2 & \leq \norm{u - \overline{u}}_{H^1}^2\\
  & = \int_M(u - \overline{u})^2 + B(u - \overline{u}, u - \overline{u})\\
  & = \int_M(u - \overline{u})^2 + \underset{k \to \infty}{\lim} \left(\int_M \Delta(u_k - \overline{u})\cdot (u_k - \overline{u})\right)\\
  & = \int_M (u - \overline{u})^2 + \underset{k \to \infty}{\lim} \left(\int_M \Delta u_k \cdot u_k - \overline{u}\int_M \Delta u_k \right)\\
  & = \int_M (u - \overline{u})^2 + B(u,u).
\end{split}
\end{equation*}

For any $v \in C^\infty(M)$, we let $\abs{\nabla v} = \sqrt{\langle \nabla v, \nabla v \rangle_g}$ denote the pointwise norm of $\nabla v$. It's clear that $\int_M \abs{\nabla v}^2 = B(v,v)$. When $v \in H^1(M)$, we define $\abs{\nabla v}$ to be the limit of $\abs{\nabla v_k}$ in $L^2_\R(M)$ where $\set{v_k}_{k=1}^\infty \subset C^\infty(M)$ converges to $v$ in $H^1(M)$. The assignment $H^1(M) \to L_\R^2(M)$ sending $v \mapsto \abs{\nabla v}$ is continuous, so it follows that 
\begin{equation}
B(v,v) = \int_M \abs{\nabla v }^2\label{B = integral of abs of nabla u^2}
\end{equation}
holds for all $v \in H^1(M)$ since it is true for smooth functions. The Poincaré inequality for $H^1(M)$ states that there exists some $A > 0$ so that 
$$\int_M (v - \overline{v})^2 \leq A\int_M |\nabla v|^2$$
for any $v \in H^1(M)$. See section 2.8 in \cite{Sobolev Spaces} for this inequality.\footnote{Note that as presented in \cite{Sobolev Spaces}, this inequality only holds when the dimension of $M$ is greater than one since we are utilizing $L^2$ Sobolev spaces. The requirement that $\text{dim}(M) > 1$ is only needed to utilize the Rellich--Kondrakov theorem, stating that the inclusion \\$H^1(M) \hookrightarrow L^2_\R(M)$ is compact. However, when $\text{dim}(M) =1 $, the inclusion is still compact. By (ii) of theorem 2.9 in \cite{Sobolev Spaces}, the inclusion $H^1(M) \hookrightarrow C^0(M)$ is compact when $\text{dim}(M) =1 $, so $H^1(M) \hookrightarrow C^0(M) \hookrightarrow L^2_\R(M)$ is compact, and the Poincaré inequality still holds.} Using (\ref{B = integral of abs of nabla u^2}), the Poincaré inequality implies
$$\int_M(v - \overline{v})^2 \leq A \cdot B(v,v)$$
for some $A>0$ independent of $v$. Thus,
\begin{equation*}
\begin{split}
\norm{u}_{H^1}^2 & \leq \int_M (u - \overline{u})^2 + B(u,u)\\
& \leq A \cdot B(u,u) + B(u,u)\\
& = (A + 1) B(u,u).\\
\end{split}
\end{equation*}
Dividing over by $(A+1)$ gives
$$\widetilde{B}(u,u) \geq \frac{1}{A+1}\norm{u}^2_{H^1}$$
for all $u \in K^\perp$, showing that $\widetilde{B}$ is coercive.
\end{proof}

We can now utilize the Lax--Milgram theorem. For each $h \in L_\R^2(M)$, define the linear functional $f_h:K^\perp \to \R$ by 
$$f_h(v) = \int_M h \cdot v = \langle h, v \rangle_{L^2},$$
which is bounded by the Cauchy--Schwarz inequality for $L^2$. By the Lax--Milgram theorem, for each $h \in L_\R^2(M)$ there exists a unique $u_h \in K^\perp$ such that 
$$B(u_h,v) = f_h(v)$$
for all $v \in K^\perp$. Let $D^{-1}: L_\R^2(M) \to K^\perp$ denote the function that maps $h \mapsto u_h$ as above. $D^{-1}$ is the inverse Laplace operator.

To see that $D^{-1}$ is linear, note that 
$$B\left(D^{-1}(h+j), \: v\right) = f_{h+j}(v) = f_h(v) + f_j(v) = B\left(D^{-1}\left(h\right) + D^{-1} (j), \: v\right)$$
so by the uniqueness of $D^{-1}(h+j)$, we must have $D^{-1}(h+j) = D^{-1}(h) + D^{-1}(j)$. Similarly, we get $D^{-1}(c \cdot h) = c \cdot D^{-1}(h)$ for $c \in \R$ from the equality $f_{ch}(v) = cf_h(v)$.

From the Lax--Milgram theorem, 
$$\norm{D^{-1}(h)}_{H^1} \leq \frac{1}{A+1}\norm{f_h} = \frac{1}{A+1} \sup \set{\: \left| f_h(v) \right| : v \in K^\perp, \; \norm{v}_{H^1}=1}.$$
From Cauchy--Schwarz and (\ref{compare L^2 and H^k}),
$$\left| f_h(v) \right| =  \left| \langle h,v \rangle_{L^2} \right| \leq \norm{h}_{L^2} \cdot \norm{v}_{H^1},$$
so
\begin{equation*}
\begin{split}
\norm{D^{-1}(h)}_{H^1} & \leq \frac{1}{A+1} \sup \set{\: \left| f_h(v) \right| : v \in K^\perp, \; \norm{v}_{H^1}=1}\\
& \leq \frac{1}{A+1}\sup \set{\: \norm{h}_{L^2} \cdot \norm{v}_{H^1} : v \in K^\perp, \; \norm{v}_{H^1}=1}\\
& = \frac{1}{A+1}\norm{h}_{L^2}.
\end{split}
\end{equation*}
Therefore, $D^{-1}$ is bounded. We may extend the codomain of $D^{-1}$ and consider it as a bounded linear operator $D^{-1}: L_\R^2(M) \to L_\R^2(M)$ since the inclusion $K^\perp \hookrightarrow L_\R^2(M)$ is continuous.

We now extend $D^{-1}$ to complex-valued functions linearly. That is, we define \\$D^{-1}:L^2_\C(M) \to L^2_\C(M)$ in the following way. For $f = f_0 + if_1 \in L^2_\C(M)$ where $f_0, f_1 \in L^2_\R(M)$ are real-valued, we set
$$D^{-1}(f) := D^{-1}(f_0) + i\cdot D^{-1}(f_1).$$
It is easy to see that $D^{-1}$ is linear on $L^2_\C(M)$. To see that it is bounded, note that on $L^2_\C(M)$,
$$D^{-1} = \left(\text{inc}_{L^2_\C} \circ D^{-1} \circ \text{Re}\right) + 
  i\cdot \left( \text{inc}_{L^2_\C} \circ D^{-1} \circ \text{Im}\right)$$
where $\text{Re}:L^2_\C(M) \to L^2_\R(M)$ and $\text{Im}:L^2_\C(M) \to L^2_\R(M)$ map a square-integrable function to its real and imaginary part respectively, and $\text{inc}_{L^2_{\C}}: L^2_{\R}(M) \to L^2_{\C}(M)$ is the natural inclusion of $L^2_\R$ into $L^2_\C$. Since $\text{Re}$, \text{Im}, $\text{inc}_{L^2_\C}$, and \\$D^{-1}:L^2_\R(M) \to L^2_\R(M)$ are all continuous functions, and the linear combination of continuous functions on a Hilbert space is again a continuous function, it follows that $D^{-1}$ is continuous on $L^2_\C(M)$. 

We summarize the above derivation in the following definition.

\begin{defin}[Inverse Laplace Operator]
$D^{-1}:L^2_\C(M) \to L^2_\C(M)$ is the bounded, linear operator given as the $\C$-linear extension to $L^2_\C(M)$ of the operator $D^{-1}:L^2_\R(M) \to L^2_\R(M)$. For each $h \in L^2_\R(M)$, $D^{-1}(h)$ is the unique function in $K^\perp$ satisfying
$$B\left(D^{-1}(h), v\right) = \int_M h \cdot v$$
for all $v \in K^\perp$, where $K^\perp$ is the orthogonal compliment of (\ref{definition of K}) in $H^1(M)$.
$D^{-1}$ is referred to as the \textit{inverse Laplace operator}, or the \textit{inverse Laplacian}.
\end{defin}

\begin{lemma}
$K \subset \text{Ker}(D^{-1})$ where $\text{Ker}(D^{-1})$ is the kernel of the inverse Laplacian. \label{K subset KerD}
\end{lemma}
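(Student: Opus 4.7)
The plan is to show that for any $u \in K$, the zero function in $K^\perp$ already satisfies the defining equation of $D^{-1}(u)$, so that the uniqueness in the Lax--Milgram construction forces $D^{-1}(u) = 0$.

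The key observation is to exploit orthogonality in $H^1(M)$. Fix $u \in K \subset H^1(M)$ and let $v \in K^\perp$ be arbitrary. By the definition of $K^\perp$,
\[
\langle u, v \rangle_{H^1} = 0.
\]
Expanding the $H^1$ inner product via formula (\ref{sobolev inner product final}) gives
\[
0 \;=\; \langle u, v \rangle_{H^1} \;=\; \int_M u \cdot v \;+\; B(u,v).
\]
Since $u \in K$, the defining property (\ref{definition of K}) yields $B(u,v) = 0$, and therefore $\int_M u \cdot v = 0$ for every $v \in K^\perp$.

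This identity says precisely that the zero element $0 \in K^\perp$ satisfies
\[
B(0, v) \;=\; 0 \;=\; \int_M u \cdot v
\]
for all $v \in K^\perp$, which is exactly the defining equation of $D^{-1}(u)$. The uniqueness of the solution furnished by the Lax--Milgram theorem then forces $D^{-1}(u) = 0$, i.e.\ $u \in \text{Ker}(D^{-1})$.

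I do not anticipate any substantive obstacle here; the argument is essentially two lines once one recognises that the formula $\langle u,v\rangle_{H^1} = \int_M u\cdot v + B(u,v)$ is tailor-made for this situation. The only piece of bookkeeping is that $u \in K$ must be viewed simultaneously as an element of $H^1(M)$ (so that $\langle u,v\rangle_{H^1}$ makes sense) and as an element of $L^2_\R(M) \hookrightarrow L^2_\C(M)$ (so that $D^{-1}(u)$ makes sense), which is justified by the continuous inclusion $H^1(M) \hookrightarrow L^2_\R(M)$ established earlier.
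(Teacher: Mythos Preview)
Your proof is correct and follows essentially the same approach as the paper's own proof: both use the identity $\langle u,v\rangle_{H^1} = \int_M u\cdot v + B(u,v)$ together with $B(u,v)=0$ and $\langle u,v\rangle_{H^1}=0$ to conclude $\int_M u\cdot v = 0$, then invoke uniqueness from Lax--Milgram. The only cosmetic difference is the order in which the two vanishing conditions are applied.
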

\begin{proof}
Let $u \in K$. For all $v \in K^\perp$, we have $B(u,v)= 0$ by definition of $K$. Hence,
\begin{equation*}
\begin{split}
\int_M u \cdot v & = \int_M u \cdot v + B(u,v)\\
& = \langle  u,v \rangle_{H^1}\\
& = 0
\end{split}
\end{equation*}
by definition of $K^\perp$. Thus,
$$B(0,v) = \int_M u \cdot v,$$
implying that $D^{-1}(u) = 0.$
\end{proof}

Just as was done for the inverse Laplace operator, we can extend the Laplacian to complex valued-functions linearly. Formally, we define the complex vector space $H_\C^2(M) := \set{f_0 + if_1 \; : \; f_0,f_1 \in H^2(M)}$ and set 
$$\Delta (f_0 + if_1) = \Delta (f_0) + i \Delta(f_1)$$
to get $\Delta:H_\C^2(M) \to L^2_\C(M)$.

\begin{prop}
    Let $f \in H^2_\C(M)$ be an eigenfunction for $\Delta$ with eigenvalue $\lambda \in \R$. If $\lambda \neq 0$, then $f$ is an eigenfunction for $D^{-1}$ with corresponding eigenvalue $\frac{1}{\lambda}$. If $\lambda = 0$, then $f$ is an eigenfunction for $D^{-1}$ with eigenvalue $\lambda = 0$.\label{eigfunctions of inverse laplace}
\end{prop}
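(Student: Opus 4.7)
The plan is to reduce first to the real-valued case, then to split into the two cases $\lambda=0$ and $\lambda\neq 0$, using Proposition \ref{connecting B and Laplace on H^2} to relate $B$ to $\Delta$, Lemma \ref{K subset KerD} for the zero eigenvalue, and the uniqueness clause in the definition of $D^{-1}$ for the nonzero case.

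First I would reduce to real-valued eigenfunctions. Writing $f=f_0+if_1$ with $f_0,f_1\in H^2(M)$ and using that $\lambda\in\R$, the equation $\Delta f=\lambda f$ separates into $\Delta f_0=\lambda f_0$ and $\Delta f_1=\lambda f_1$. By the $\C$-linear extension in the definition of $D^{-1}$, proving $D^{-1}(f_j)=\mu f_j$ for $j=0,1$ gives $D^{-1}(f)=\mu f$. So I can assume $f\in H^2(M)$ is real.

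For the case $\lambda=0$, I want to show $f\in K$, since then $f\in\mathrm{Ker}(D^{-1})$ by Lemma \ref{K subset KerD}. Note $f\in H^2(M)\subset H^1(M)$, so by Proposition \ref{connecting B and Laplace on H^2}, for every $v\in H^1(M)$,
\[
B(f,v)=\int_M \Delta f\cdot v=\int_M 0\cdot v=0,
\]
which is exactly the definition of $K$. Hence $D^{-1}(f)=0=\tfrac{1}{\lambda}\cdot f$ is vacuous here; we only conclude $D^{-1}(f)=0$, matching the claim.

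For the case $\lambda\neq 0$, the strategy is to check that $\tfrac{1}{\lambda}f$ satisfies both defining properties of $D^{-1}(f)$: membership in $K^\perp$ and the variational equation $B\bigl(\tfrac{1}{\lambda}f,v\bigr)=\int_M f\cdot v$ for all $v\in K^\perp$. By uniqueness from Lax--Milgram, this forces $D^{-1}(f)=\tfrac{1}{\lambda}f$. The variational equation is the easy part: for any $v\in K^\perp\subset H^1(M)$, Proposition \ref{connecting B and Laplace on H^2} gives
\[
B\!\left(\tfrac{1}{\lambda}f,v\right)=\tfrac{1}{\lambda}\int_M \Delta f\cdot v=\tfrac{1}{\lambda}\int_M \lambda f\cdot v=\int_M f\cdot v.
\]

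The main obstacle, and the real content of the argument, is showing $f\in K^\perp$. To this end, pick any $u\in K$; I must show $\langle f,u\rangle_{H^1}=0$. By (\ref{sobolev inner product final}), $\langle f,u\rangle_{H^1}=\int_M f\cdot u+B(f,u)$. Using symmetry of $B$ and the defining property of $K$, $B(f,u)=B(u,f)=0$. It then remains to show $\int_M f\cdot u=0$. Here I apply Proposition \ref{connecting B and Laplace on H^2} once more: since $f\in H^2(M)$ and $u\in H^1(M)$,
\[
0=B(u,f)=B(f,u)=\int_M \Delta f\cdot u=\lambda\int_M f\cdot u,
\]
and since $\lambda\neq 0$ this forces $\int_M f\cdot u=0$. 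Therefore $\langle f,u\rangle_{H^1}=0$ for every $u\in K$, so $f\in K^\perp$ and hence $\tfrac{1}{\lambda}f\in K^\perp$. By the uniqueness of the element of $K^\perp$ provided by Lax--Milgram, $D^{-1}(f)=\tfrac{1}{\lambda}f$, completing the proof.
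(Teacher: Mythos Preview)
Your proof is correct and follows essentially the same approach as the paper: reduce to real eigenfunctions, handle $\lambda=0$ via $\mathrm{Ker}(\Delta)\subset K$ and Lemma~\ref{K subset KerD}, and for $\lambda\neq 0$ show $f\in K^\perp$ and invoke Lax--Milgram uniqueness. The only cosmetic difference is that the paper deduces $\int_M f\cdot u=0$ by computing $\langle f,u\rangle_{H^1}$ in two ways, whereas you get it more directly from $0=B(f,u)=\int_M \Delta f\cdot u=\lambda\int_M f\cdot u$.
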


\begin{proof}
Let $f = f_0 + if_1$ where $f_0$ and $f_1$ are the real and imaginary parts of $f$ respectively. Since $\lambda$ is real-valued, it follows that $f_0$ and $f_1$ are both eigenfunctions of $\Delta$ with eigenvalue $\lambda$ by taking the real and imaginary parts of \\$\Delta(f) = \Delta(f_0) + i\Delta(f_1) = \lambda(f_0 + if_1)$.

First suppose that $\lambda = 0$. Then $f_0,f_1 \in \text{Ker}(\Delta)$. It's clear from proposition \ref{connecting B and Laplace on H^2} that $\text{Ker}(\Delta) \subset K$, so from lemma \ref{K subset KerD} we have $f_0,f_1 \in \text{Ker}(D^{-1})$.

Now suppose $\lambda \neq 0$. Let $w \in K$. Then $B(f_0,w) = 0$, so $\langle f_0,w \rangle_{H^1} = \int_M f_0 w$. Since $f_0$ is an eigenfunction for $\Delta$, we also have 
\begin{equation*}
\begin{split}
\langle f_0,w \rangle_{H^1} & = \int_M f_0 \cdot w + \int_M \Delta f_0 \cdot w\\
& = (1 + \lambda)\int_M f_0 \cdot w,
\end{split}
\end{equation*}
implying that $\int_M f_0 w = (1 + \lambda)\int_M f_0 w \implies \int_M f_0 w = 0$. Therefore, \\$\langle f_0,w \rangle_{H^1} = 0$, so $f_0 \in K^\perp$ as $w \in K$ was arbitrary. Likewise, it follows that $f_1 \in K^\perp$. In particular, we have $\frac{1}{\lambda}f_0, \frac{1}{\lambda}f_1 \in K^\perp$.

For any $v \in K^\perp,$
\begin{equation*}
\begin{split}
 B\left(\frac{1}{\lambda}f_0, \: v\right) & = \int_M \Delta\left(\frac{1}{\lambda}f_0\right) \cdot v\\
 & = \int_M f_0 \cdot v,   
\end{split}
\end{equation*}
so the uniqueness condition defining $D^{-1}$ implies that $D^{-1}(f_0) = \frac{1}{\lambda}f_0$. Similarly, \\$D^{-1}(f_1) = \frac{1}{\lambda}f_1$. Therefore,
\begin{equation*}
\begin{split}
 D^{-1}(f) & = D^{-1}(f_0) + i \cdot D^{-1}(f_1)\\
 & = \frac{1}{\lambda}(f_0 + if_1)\\
 & = \frac{1}{\lambda}f.
\end{split}
\end{equation*}
\end{proof}

\section{Traces Involving the Inverse Laplacian}\label{traces involving inverse laplace}

\subsection{The $S^1$ Case}

Let's start by considering the inverse Laplacian defined on $S^1.$ Via an application of the Stone--Weierstrass theorem, and the fact that continuous functions are dense in $L^2(S^1)$, it follows that the functions \\$\set{\theta \mapsto \frac{1}{\sqrt{2\pi}}e^{i k \theta}}_{k \in \Z} \subset C^\infty(S^1)$ form an orthonormal basis for $L^2(S^1)$. Also, from example \ref{S^1 laplace}, we know that 
$$\Delta_{S^1}\left(\frac{1}{\sqrt{2\pi}}e^{i k \theta}\right) = \dfrac{d^2}{d\theta^2}\left(\frac{1}{\sqrt{2\pi}}e^{i k \theta}\right) = -k^2\frac{1}{\sqrt{2\pi}}e^{i k \theta}.$$
Thus, from proposition \ref{eigfunctions of inverse laplace}, it follows that for $k \neq 0$
$$D^{-1}_{S^1}\left(\frac{1}{\sqrt{2\pi}}e^{i k \theta}\right) = -\frac{1}{k^2} \cdot \frac{1}{\sqrt{2\pi}}e^{i k \theta}$$
and for $k = 0$, 
$$D_{S^1}^{-1} \left(\frac{1}{\sqrt{2\pi}}\right) = 0.$$
Thus, the orthonormal basis $\set{\frac{1}{\sqrt{2\pi}}e^{i k \theta}}_{k \in \Z} \subset L^2(S^1)$ is an eigenbasis with respect to $D_{S^1}^{-1}$ with corresponding eigenvalues $$\set{0} \bigcup \set{-\frac{1}{k^2}}_{k \in \Z \setminus 0}.$$
It follows that the $n$-fold composition $D_{S^1}^{-n} := (D_{S^1}^{-1})^n = D_{S^1}^{-1} \circ \cdots \circ D_{S^{1}}^{-1}$ has eigenvalues
$$\set{0} \bigcup \set{ \frac{(-1)^n}{k^{2n}}}_{k \in \Z \setminus 0}$$
with respect to the same basis. By proposition \ref{compute trace with eigvals}, in order to show that $D^{-n}_{S^1}$ is trace class, it suffices to show that $\sum_{k \in \Z \setminus 0} (-1)^n \cdot \frac{1}{k^{2n}}$ converges absolutely.

\begin{prop}
$D^{-n}_{S^1}$ is trace class for all $n \in \N$ with 
$$\text{Tr}(D_{S^1}^{-n}) = 2(-1)^n \cdot \zeta(2n)$$
where $\zeta(s) = \sum_{j=1}^\infty \dfrac{1}{j^s}$ is the Riemann zeta function.
\end{prop}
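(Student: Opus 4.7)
The plan is to apply Proposition \ref{compute trace with eigvals} directly, since all the heavy lifting has already been done in the paragraph preceding the statement: we have explicitly exhibited an orthonormal eigenbasis $\set{\frac{1}{\sqrt{2\pi}}e^{ik\theta}}_{k \in \Z}$ of $L^2_\C(S^1)$ for $D_{S^1}^{-n}$ with eigenvalues $\set{0}\cup \set{(-1)^n/k^{2n}}_{k \in \Z\setminus 0}$. So the entire content of the proof reduces to verifying the absolute summability hypothesis of Proposition \ref{compute trace with eigvals} and then evaluating the resulting sum.

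First I would record that the sum of absolute values of the eigenvalues equals
\begin{equation*}
\sum_{k \in \Z \setminus \set{0}} \abs{\frac{(-1)^n}{k^{2n}}} = \sum_{k \in \Z \setminus \set{0}} \frac{1}{k^{2n}} = 2 \sum_{k=1}^{\infty} \frac{1}{k^{2n}} = 2\zeta(2n),
\end{equation*}
where the factor of $2$ comes from pairing $k$ with $-k$, and the final series converges because $2n \geq 2$ for every $n \in \N$, which is the standard convergence range for the Riemann zeta function. This verifies the hypothesis of Proposition \ref{compute trace with eigvals}, so $D_{S^1}^{-n}$ is trace class.

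Next I would compute the trace by invoking the conclusion of Proposition \ref{compute trace with eigvals}, namely that $\text{Tr}(D_{S^1}^{-n})$ equals the sum of the eigenvalues. The zero eigenvalue contributes nothing, and the remaining eigenvalues all carry a common factor $(-1)^n$ which can be pulled out, yielding
\begin{equation*}
\text{Tr}(D_{S^1}^{-n}) = \sum_{k \in \Z \setminus \set{0}} \frac{(-1)^n}{k^{2n}} = (-1)^n \sum_{k \in \Z \setminus \set{0}} \frac{1}{k^{2n}} = 2(-1)^n \zeta(2n),
\end{equation*}
where the rearrangement is justified by the absolute convergence established above.

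There is really no main obstacle here; the proposition is an immediate corollary of Proposition \ref{compute trace with eigvals} once the spectral decomposition of $D_{S^1}^{-1}$ has been determined, and the bulk of the work has already been expended in establishing that spectral decomposition and in proving Proposition \ref{compute trace with eigvals}. The one small subtlety worth mentioning in the write-up is that the eigenbasis indexed by $\Z$ must first be viewed as indexed by $\N$ (say by some bijection $\Z \cong \N$), but since the series converges absolutely this reordering has no effect on either the trace class condition or the value of the sum.
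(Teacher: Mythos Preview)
Your proposal is correct and follows essentially the same route as the paper: both invoke Proposition \ref{compute trace with eigvals} using the eigenbasis already exhibited, verify absolute convergence of $\sum_{k\in\Z\setminus 0} 1/k^{2n}=2\zeta(2n)$ via the pairing $k\leftrightarrow -k$, and then read off the trace. The paper even makes the same remark about reindexing $\Z\setminus 0$ by $\N$ that you flag as the one small subtlety.
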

\begin{proof}
The series $\sum_{k \in \Z \setminus 0} \frac{1}{k^{2n}}$ converges absolutely to $2 \zeta(2n)$, which is seen by ordering the terms in the series according to the mapping $\N \to \Z \setminus 0$ given by $1 \mapsto 1, 2 \mapsto -1, 3 \mapsto 2, 4 \mapsto -2,...$ etc. Since $2n>1$, $\zeta(2n) < \infty$. Hence, it clearly follows that $\sum_{k \in \Z \setminus 0} (-1)^n \cdot \frac{1}{k^{2n}}$ converges absolutely to $(-1)^n \cdot 2\zeta(2n) < \infty.$
\end{proof}

\subsection{The $S^1 \times S^1$ Case}

Let's now consider a slightly more difficult example- the inverse Laplace on the torus, $S^1 \times S^1$. Similar to before, the functions \\$\set{(\theta_1,\theta_2) \mapsto \frac{1}{2\pi} e^{ik\theta_1} \cdot e^{im\theta_2}}_{(k,m) \in \Z^2}$ form an orthonormal basis for $L^2(S^1 \times S^1)$. 

Recall from example \ref{torus laplace} that $$\Delta_{S^1 \times S^1}\left(\frac{1}{2\pi} e^{ik\theta_1} \cdot e^{im\theta_2}\right) = -(k^2 + m^2) \frac{1}{2\pi} e^{ik\theta_1} \cdot e^{im\theta_2},$$
so for $(k,m) \neq 0$,
$$D^{-1}_{S^1 \times S^1}\left(\frac{1}{2\pi} e^{ik\theta_1} \cdot e^{im\theta_2}\right) = -\dfrac{1}{k^2 + m^2} \cdot \left(\frac{1}{2\pi} e^{ik\theta_1} \cdot e^{im\theta_2}\right),$$
and for $k = m = 0$,
$$D_{S^1 \times S^1}^{-1}\left(\frac{1}{2\pi}\right) = 0.$$
The eigenvalues of $D_{S^1 \times S^1}^{-n}$ corresponding to the basis $\set{\frac{1}{2\pi} e^{ik\theta_1} \cdot e^{im\theta_2}}_{(k,m) \in \Z^2}$ are then 
$$\set{0} \bigcup \set{\dfrac{(-1)^n}{(k^2 + m^2)^{n}}}_{(k,m) \in \Z^2\setminus (0,0)}.$$

\begin{prop}
For $n \geq 2$, the sum 
$$\sum_{(k,m) \in \Z^2 \setminus{(0,0)}} \dfrac{1}{(k^2 + m^2)^n}$$ converges absolutely with 
$$\sum_{(k,m) \in \Z^2 \setminus{(0,0)}} \dfrac{1}{(k^2 + m^2)^n} = 4 \zeta(n) \beta(n)$$
where $\zeta(n)$ is the Riemann zeta function and $\beta(n) = \sum_{m=0}^\infty (-1)^m (2m+1)^{-n}.$\label{lattice sum example}
\end{prop}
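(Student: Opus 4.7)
The plan is to recognize this as an Epstein-type lattice zeta function and to factor it as a product of two Dirichlet series via Jacobi's classical two-square theorem.

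First, I would establish absolute convergence by a lattice-counting argument. Grouping the nonzero lattice points by $R := \max(|k|,|m|) \geq 1$, each ``square annulus'' contains exactly $(2R+1)^2 - (2R-1)^2 = 8R$ points, each satisfying $k^2 + m^2 \geq R^2$. Hence
$$\sum_{(k,m) \in \Z^2 \setminus (0,0)} \frac{1}{(k^2+m^2)^n} \leq \sum_{R=1}^\infty \frac{8R}{R^{2n}} = 8 \sum_{R=1}^\infty R^{1-2n},$$
which converges whenever $2n - 1 > 1$, i.e., for $n \geq 2$.

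By absolute convergence, I may regroup the sum according to the value $N := k^2 + m^2$, obtaining
$$\sum_{(k,m) \in \Z^2 \setminus (0,0)} \frac{1}{(k^2+m^2)^n} = \sum_{N=1}^\infty \frac{r_2(N)}{N^n},$$
where $r_2(N)$ denotes the number of representations of $N$ as an ordered sum $k^2 + m^2$ with $(k,m) \in \Z^2$. I would then invoke Jacobi's two-square theorem, which asserts
$$r_2(N) = 4 \sum_{d \mid N} \chi(d),$$
where $\chi$ is the non-principal Dirichlet character modulo $4$ (so $\chi(d) = 0$ for even $d$ and $\chi(2m+1) = (-1)^m$ for odd $d$).

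Substituting this formula and reindexing the double sum via the bijection $\{(N, d) : d \mid N\} \leftrightarrow \N \times \N$ given by $N = dk$ (Fubini is legal since the absolute double series is dominated by $\zeta(n)^2 < \infty$) yields
$$\sum_{N=1}^\infty \frac{r_2(N)}{N^n} = 4 \left( \sum_{d=1}^\infty \frac{\chi(d)}{d^n} \right) \left( \sum_{k=1}^\infty \frac{1}{k^n} \right) = 4 \beta(n) \zeta(n),$$
where the first factor equals $\beta(n)$ since only the odd $d = 2m+1$ contribute and $\chi(2m+1) = (-1)^m$. The main obstacle is Jacobi's two-square theorem itself, a nontrivial classical result typically proved via theta-function identities or by exploiting unique factorization in the Gaussian integers $\Z[i]$; the author will presumably cite or prove it separately. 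An alternative route through the Dedekind zeta function of $\Q(i)$ produces the same factorization but requires comparable number-theoretic input.
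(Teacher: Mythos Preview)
Your proof is correct and takes a genuinely different route from the paper's. The paper evaluates the lattice sum by writing it as a Mellin transform of $\theta_3^2 - 1$ (with $\theta_3 = \sum_{n \in \Z} q^{n^2}$), then invoking the Lambert-series identity $\theta_3^2 - 1 = 4\sum_{k \geq 1}\sum_{m \geq 0}(-1)^m q^{k(2m+1)}$ and computing the Mellin transform term by term. Your approach instead groups by $N = k^2 + m^2$, cites Jacobi's two-square formula $r_2(N) = 4\sum_{d \mid N}\chi(d)$, and factors the resulting Dirichlet series directly as $4\zeta(n)\beta(n)$.

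The two arguments rest on essentially the same classical fact: equating $q$-coefficients in the theta identity the paper quotes \emph{is} Jacobi's two-square theorem, so neither route avoids that input. What you gain is directness---no Mellin transform or gamma function manipulation is needed, and your separate lattice-counting bound for absolute convergence is clean and self-contained (the paper leaves convergence implicit in the final evaluation). What the paper's Mellin-transform framework buys is a template that extends uniformly to higher-dimensional lattice sums and other theta-type generating functions, which is the viewpoint of the reference it cites. Either argument is fully adequate here; yours is arguably the more transparent for this specific identity.
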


\begin{proof}
This is a classic case of a ``lattice sum'' that appears in \cite{Lattice Sums} on pages 31-33, evaluated using theta functions and the Mellin transform. Let us first define the Mellin transform of a function $f(t)$ as
$$M_s(f(t)) = \dfrac{1}{\Gamma(s)}\int_{0}^\infty t^{s-1}f(t)dt$$ for whatever $s \in \C$ it is defined, where $\Gamma(s)$ is the gamma function. \footnote{It is common to define $M_s$ without the $\Gamma(s)$ factor. However, it is convenient to include it for this computation.} Also, let's define the function $\theta_3$ as 
$$\theta_3 = \sum_{n=-\infty}^\infty q^{n^2}$$
for all $q$ such that the sum is well-defined. Unless stated otherwise, let \\$(k,m) \in \Z^2 \setminus (0,0)$. 
Note that we have the equality 
$$\sum_{(k,m)}(k^2 + m^2)^{-n} = M_n\left( \sum_{(k,m)}e^{-(k^2 + m^2)t} \right)$$
since
\begin{equation*}
\begin{split}
M_n\left( \sum_{(k,m)}e^{-(k^2 + m^2)t} \right)  & = \dfrac{1}{\Gamma(n)}\int_{0}^\infty t^{n-1}\sum_{(k,m)}e^{-(k^2 + m^2)t}dt\\
& = \dfrac{1}{\Gamma(n)}\sum_{(k,m)}\int_{0}^\infty t^{n-1} e^{-(k^2 + m^2)t}dt,
\end{split}
\end{equation*}
and by making the substitution $u = (k^2 + m^2)t$, we get
\begin{equation*}
\begin{split}
\dfrac{1}{\Gamma(n)}\sum_{(k,m)}\int_{0}^\infty t^{n-1} e^{-(k^2 + m^2)t}dt & = \dfrac{1}{\Gamma(n)}\sum_{(k,m)} \int_0^\infty \left(\dfrac{u}{(k^2 + m^2)}\right)^{n-1} e^{-u}\dfrac{du}{(k^2 + m^2)}\\
& = \dfrac{1}{\Gamma(n)}  \sum_{(k,m)} \dfrac{1}{(k^2+m^2)^n} \int_0^\infty u^{n-1}e^{-u}du\\
& = \dfrac{1}{\Gamma(n)} \sum_{(k,m)} \dfrac{1}{(k^2+m^2)^n} \Gamma(n)\\
& = \sum_{(k,m)} \dfrac{1}{(k^2+m^2)^n}.
\end{split}
\end{equation*}
Also, note that
$$\theta_3^2 = \sum_{k=-\infty}^\infty q^{k^2} \cdot \sum_{m=-\infty}^\infty q^{m^2} = \sum_{k=-\infty}^\infty \sum_{m=-\infty}^\infty q^{k^2}q^{m^2} = \sum_{(k,m) \in \Z^2}q^{k^2 + m^2},$$ 
so by taking $q = e^{-t}$, it follows that 
$$\theta_3^2 = \sum_{(k,m) \in \Z^2}e^{-(k^2 + m^2)t},$$
implying that
$$\theta^2_3 - 1 = \sum_{(k,m)}e^{-(k^2 + m^2)t}$$
(where, once again, $(k,m) \in \Z^2 \setminus (0,0)$, which is why subtracting by 1 is necessary). Therefore, 
$$\sum_{(k,m)}(k^2 + m^2)^{-n} = M_n(\theta_3^2 - 1).$$

As mentioned in \cite{Lattice Sums}, one can compute $\theta_3^2 -1 = 4 \sum_{k=1}^\infty\sum_{m=0}^\infty q^k(-1)^m q^{2km}$, so 
\begin{equation*}
\begin{split}
    \sum_{(k,m)}(k^2 + m^2)^{-n} & = M_n(\theta^2_3 - 1)\\
    & = M_n\left(4 \sum_{k=1}^\infty \sum_{m=0}^\infty e^{-tk}(-1)^m (e^{-2kmt})\right)\\
    & = \dfrac{4}{\Gamma(n)}\sum_{k=1}^\infty \sum_{m=0}^\infty \int_0^\infty (-1)^m t^{n-1}e^{-k(2m+1)t}dt.
\end{split}
\end{equation*}
Via the substitution $u = k(2m+1)t$ and integrating, we obtain
\begin{equation*}
\begin{split}
\sum_{(k,m)}(k^2 + m^2)^{-n} & = 4 \sum_{k=1}^\infty \sum_{m=0}^\infty \dfrac{1}{k^n} \cdot (-1)^m (2m+1)^{-n}\\
& = 4\sum_{k=1}^\infty \dfrac{1}{k^n} \cdot \sum_{m=0}^\infty (-1)^m(2m+1)^{-n}\\
& = 4\zeta(n)\beta(n)\\
& < \infty
\end{split}
\end{equation*}
since $n \geq 2$.

\end{proof}

\begin{corollary}
$D_{S^1 \times S^1}^{-n}$ is trace class for $n \geq 2$ with 
$$\text{Tr}(D_{S^1 \times S^1}^{-n}) = 4(-1)^n \cdot \zeta(n) \beta(n).$$
\end{corollary}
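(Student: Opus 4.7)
The plan is to apply Proposition \ref{compute trace with eigvals} directly. The discussion preceding the corollary has already identified an orthonormal eigenbasis for $D^{-n}_{S^1 \times S^1}$ on $L^2_\C(S^1 \times S^1)$, namely
$$\set{\frac{1}{2\pi}e^{ik\theta_1}e^{im\theta_2}}_{(k,m) \in \Z^2},$$
with corresponding eigenvalues $0$ at $(k,m) = (0,0)$ and $\frac{(-1)^n}{(k^2+m^2)^n}$ otherwise. Thus the only item left before I can invoke Proposition \ref{compute trace with eigvals} is verification that the multiset of eigenvalues is absolutely summable.

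Absolute summability is exactly what Proposition \ref{lattice sum example} supplies: the eigenvalue at $(0,0)$ contributes nothing, while for $n \geq 2$
$$\sum_{(k,m) \in \Z^2 \setminus (0,0)} \abs{\frac{(-1)^n}{(k^2+m^2)^n}} = \sum_{(k,m) \in \Z^2 \setminus (0,0)} \frac{1}{(k^2+m^2)^n} = 4\zeta(n)\beta(n) < \infty.$$
Proposition \ref{compute trace with eigvals} then concludes simultaneously that $D^{-n}_{S^1 \times S^1}$ is trace class and that its trace equals the (now absolutely convergent) sum of its eigenvalues. Factoring the common sign $(-1)^n$ out of that sum yields
$$\text{Tr}(D^{-n}_{S^1 \times S^1}) = (-1)^n \sum_{(k,m) \in \Z^2 \setminus (0,0)} \frac{1}{(k^2+m^2)^n} = 4(-1)^n \zeta(n) \beta(n),$$
which is the claimed identity.

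There is no real obstacle to overcome here: all of the analytic content has already been absorbed into Proposition \ref{lattice sum example}, and all of the operator-theoretic content has been absorbed into Proposition \ref{compute trace with eigvals}. The only background fact I tacitly rely on is that the exponential family above is genuinely an orthonormal basis of $L^2_\C(S^1 \times S^1)$ (not merely an orthonormal set of eigenvectors), and the paper has already established this in the corresponding $S^1$ discussion by combining Stone--Weierstrass with the density of $C^0$ in $L^2$; the torus version is the same argument on a product. Thus the corollary reduces to two citations, and the proof should be a few lines.
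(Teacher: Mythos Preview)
Your proposal is correct and mirrors the paper's own proof exactly: the paper's argument is the single line ``Use proposition \ref{compute trace with eigvals} with proposition \ref{lattice sum example},'' which is precisely the two citations you identify. Your write-up simply makes explicit the absolute summability step and the factoring of $(-1)^n$, which the paper leaves implicit.
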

\begin{proof}
Use proposition \ref{compute trace with eigvals} with proposition \ref{lattice sum example}.\\
\end{proof}

\subsection{The Inverse Laplacian with $d^\ast$ on the Torus}

Building on our last example, we can couple $D^{-1}_{S^1 \times S^1}$ with the operator $d^\ast$, which is the formal adjoint to the exterior derivative viewed as a map $C^\infty(S^1 \times S^1) \to C^\infty(S^1 \times S^1)$, to get the operator $P_{S^1 \times S^1}^n:= (d^\ast \circ D^{-1}_{S^1 \times S^1})^n$ on $L^2(S^1 \times S^1)$ for each $n \in \N$ (see \cite{GMW} for a more detailed description of this operator). 

More precisely, for the flat square torus,
we have
$$d^\ast = \dfrac{\del}{\del \theta_1} + \dfrac{\del}{\del \theta_2}$$ 
on $\Lambda$, where $\Lambda$ is the finite span of $\set{\frac{1}{2\pi} e^{ik\theta_1} \cdot e^{im\theta_2}}_{(k,m) \in \Z^2}$ over $\C$, viewed as a subspace of $L^2(S^1 \times S^1)$. Via an application of the Pythagorean identity, it follows that the operator $d^\ast \circ D^{-1}_{S^1 \times S^1}:\Lambda \to \Lambda$ is bounded. Hence, it admits a unique extension to a bounded linear map $d^\ast \circ D^{-1}_{S^1 \times S^1}: L^2(S^1 \times S^1) \to L^2(S^1 \times S^1)$ which we denote by $P_{S^1 \times S^1}$.

The eigenvalues of $P^n_{S^1 \times S^1}$ corresponding to the basis $\set{\frac{1}{2\pi} e^{ik\theta_1} \cdot e^{im\theta_2}}_{(k,m) \in \Z^2}$ are

$$\set{0} \bigcup \set{(-i)^n \cdot \dfrac{(k+m)^n}{(k^2 + m^2)^n}}_{(k,m) \in \Z^2 \setminus (0,0)}$$
Therefore, our focus will be on summing these eigenvalues in order to determine if $P_{S^1 \times S^1}^n$ has a trace.

\begin{prop}
For any $n \in 2 \N$ with $n > 2$, the operator $P^n_{S^1 \times S^1}$ is trace class with
$$\text{Tr}(P_{S^1 \times S^1}^n) = i^n \sum_{(k,m)} \dfrac{(k+m)^n}{(k^2+m^2)^n}$$
where $(k,m) \in \Z^2 \setminus (0,0)$.\label{torus P trace even}
\end{prop}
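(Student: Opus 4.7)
The plan is to apply Proposition \ref{compute trace with eigvals} to the orthonormal eigenbasis $\set{\frac{1}{2\pi} e^{ik\theta_1} e^{im\theta_2}}_{(k,m) \in \Z^2}$ of $P^n_{S^1 \times S^1}$, whose corresponding eigenvalues have already been computed in the preceding paragraph. Since $n$ is even, $(-i)^n = i^n$, so once we verify absolute convergence of the eigenvalue series we immediately obtain the identity
$$\text{Tr}(P_{S^1 \times S^1}^n) = \sum_{(k,m) \in \Z^2 \setminus (0,0)} (-i)^n \frac{(k+m)^n}{(k^2+m^2)^n} = i^n \sum_{(k,m)} \frac{(k+m)^n}{(k^2+m^2)^n}.$$
So the entire content of the proof is reducing trace-class-ness to the absolute convergence of this lattice sum.

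To estimate the sum, I would use the elementary inequality $(k+m)^2 \leq 2(k^2 + m^2)$ (which follows from $2km \leq k^2 + m^2$, i.e.\ Cauchy--Schwarz applied to $(1,1)$ and $(k,m)$). Because $n$ is even, we may raise this to the $n/2$ power to obtain
$$|k+m|^n = \bigl((k+m)^2\bigr)^{n/2} \leq 2^{n/2} (k^2 + m^2)^{n/2}.$$
Dividing by $(k^2+m^2)^n$ gives the term-by-term bound
$$\frac{|k+m|^n}{(k^2+m^2)^n} \leq \frac{2^{n/2}}{(k^2+m^2)^{n/2}}.$$
The hypothesis $n > 2$ with $n$ even forces $n \geq 4$, hence $n/2 \geq 2$, and this is exactly the regime where Proposition \ref{lattice sum example} guarantees that $\sum_{(k,m) \neq (0,0)} (k^2+m^2)^{-n/2}$ converges (with value $4\zeta(n/2)\beta(n/2)$).

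Absolute convergence of the eigenvalue series then follows by comparison, and invoking Proposition \ref{compute trace with eigvals} finishes the argument. I do not anticipate a genuine obstacle here: the only mildly delicate point is the parity condition, which is used twice in a very controlled way, namely to cleanly take a square root inside the bound $|k+m|^n \leq 2^{n/2}(k^2+m^2)^{n/2}$ without worrying about signs, and to collapse $(-i)^n$ to $i^n$ so that the stated formula comes out with the correct phase. The restriction $n > 2$ (rather than $n \geq 2$) is exactly the threshold needed to land in the convergent range of Proposition \ref{lattice sum example}, which is why the $n = 2$ case must be treated separately.
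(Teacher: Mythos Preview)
Your argument is correct and in fact cleaner than the paper's. Both proofs have the same overall shape: verify absolute convergence of the eigenvalue series and then invoke Proposition~\ref{compute trace with eigvals}, reducing everything to a comparison with $\sum_{(k,m)\neq(0,0)} (k^2+m^2)^{-n/2}$, which converges by Proposition~\ref{lattice sum example} since $n/2 \geq 2$. The difference is in how that comparison is obtained. The paper expands $(k+m)^n$ via the binomial theorem and then argues term by term, splitting into the cases $j$ even and $j$ odd and in each case bounding $|k^{n-j}m^j|$ by $(k^2+m^2)^{n/2}$ through a second binomial expansion. You bypass all of this with the single inequality $(k+m)^2 \leq 2(k^2+m^2)$, which after raising to the $n/2$ power yields the same comparison in one line (at the harmless cost of a constant factor $2^{n/2}$). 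Your approach is more elementary and more transparent; the paper's approach, while longer, has the minor incidental benefit of giving sharper constants on the individual monomial pieces, though this is not used anywhere. As a small remark, the inequality $|k+m| \leq \sqrt{2}\,(k^2+m^2)^{1/2}$ can be raised to any real power, so the evenness of $n$ is not actually needed for the bound itself; it is only used, as you note, to rewrite $(-i)^n$ as $i^n$.
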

\begin{proof}
Unless stated otherwise, let $(k,m) \in \Z^2 \setminus (0,0).$ As per usual, our goal is to show that $\sum_{(k,m)} \dfrac{(k+m)^n}{(k^2 + m^2)^n}$ converges absolutely, and our desired result will follow from proposition \ref{compute trace with eigvals}. First, note that by the Binomial theorem, 

$$\dfrac{(k+m)^n}{(k^2+m^2)^n} = \sum_{j=0}^n \binom{n}{j}  \dfrac{k^{n-j}m^j}{(k^2+m^2)^n}.$$
Hence, if it is shown that the series $\sum_{(k,m)} \frac{k^{n-j}m^j}{(k^2+m^2)^n}$ converges absolutely for each $0 \leq j \leq n$, then the proof will be complete.\\

Suppose that $j$ is even. Then $n-j$ is also even, implying that the terms in the sum above are all nonnegative. Also, $(j/2) \in \Z$ with $0 \leq \frac{j}{2} \leq \frac{n}{2}$, so $\binom{n/2}{j/2}(k^2)^{\frac{n}{2} - \frac{j}{2}}(m^2)^{\frac{j}{2}}$ is a term in the sum $\sum_{i=0}^{n/2} \binom{n/2}{i} (k^2)^{\frac{n}{2} - i} (m^2)^{i}.$ In particular, $$\binom{n/2}{j/2}(k^2)^{\frac{n}{2} - \frac{j}{2}}(m^2)^{\frac{j}{2}} \leq \sum_{i=0}^{n/2} \binom{n/2}{i} (k^2)^{\frac{n}{2} - i} (m^2)^{i}.$$ We can utilize the Binomial theorem once more and expand $(k^2 + m^2)^{n/2} = \sum_{i=0}^{n/2} \binom{n/2}{i} (k^2)^{\frac{n}{2} - i} (m^2)^{i}$. Thus,

$$k^{n-j}m^j \leq \binom{n/2}{j/2}(k^2)^{\frac{n}{2} - \frac{j}{2}}(m^2)^{\frac{j}{2}} \leq \sum_{i=0}^{n/2} \binom{n/2}{i} (k^2)^{\frac{n}{2} - i} (m^2)^{i} = (k^2 + m^2)^{n/2},$$
which implies that
$$\sum_{(k,m)} \dfrac{k^{n-j}m^j}{(k^2 + m^2)^n} \leq \sum_{(k,m)} \dfrac{(k^2 + m^2)^{n/2}}{(k^2 + m^2)^n} = 
\sum_{(k,m)}\dfrac{1}{(k^2 + m^2)^{n/2}} < \infty.$$
The convergence of $\sum_{(k,m)}\dfrac{1}{(k^2 + m^2)^{n/2}}$ follows from proposition \ref{lattice sum example} since \\$n/2 \geq 2.$

Suppose now that $j$ is odd. Then $j-1$ and $j+1$ are even such that \\$0 \leq \frac{j-1}{2}, \: \frac{j+1}{2} \leq \frac{n}{2}$, so 
$$\binom{n/2}{(j-1)/2}(k^2)^{\frac{n}{2} - \frac{j-1}{2}}(m^2)^{\frac{j-1}{2}}$$ and $$\binom{n/2}{(j+1)/2}(k^2)^{\frac{n}{2} - \frac{j+1}{2}}(m^2)^{\frac{j+1}{2}}$$ are both terms in the sum $\sum_{i=0}^{n/2} \binom{n/2}{i} (k^2)^{\frac{n}{2} - i} (m^2)^i$. Thus, 
\begin{equation*}
\begin{split}
k^{n-(j-1)}m^{j-1} + k^{n-(j+1)}m^{j+1} & \leq \binom{n/2}{(j-1)/2}(k^2)^{\frac{n}{2} - \frac{j-1}{2}}(m^2)^{\frac{j-1}{2}} \\
& + \binom{n/2}{(j+1)/2}(k^2)^{\frac{n}{2} - \frac{j+1}{2}}(m^2)^{\frac{j+1}{2}}\\ 
& \leq \sum_{i=0}^{n/2} \binom{n/2}{i} (k^2)^{\frac{n}{2} - i} (m^2)^i\\ 
& = (k^2 + m^2)^{n/2}.
\end{split}
\end{equation*}
If we can show that $|k^{n-j}m^j| \leq k^{n-(j-1)}m^{j-1} + k^{n-(j+1)}m^{j+1}$, then we can make the same comparison with the series $\sum_{(k,m)}(k^2+m^2)^{-n/2}$ applied in the previous case, and the proof will be complete. Note that $|km| \leq k^2 + m^2$. Therefore, 
\begin{equation*}
\begin{split}
|k^{n-j}m^j| & = k^{n-j-1}m^{j-1}|km| \\
& \leq k^{n-j-1}m^{j-1}(k^2 + m^2)\\ 
& = k^{n-(j-1)}m^{j-1} + k^{n-(j+1)}m^{j+1}\\
& \leq (k^2 + m^2)^{n/2},
\end{split}
\end{equation*}
so
$$\sum_{(k,m)}\left| \dfrac{k^{n-j}m^j}{(k^2 + m^2)^n} \right| \leq \sum_{(k,m)}  \dfrac{1}{(k^2 + m^2)^{n/2}} < \infty.$$

Since each case has been considered, it follows that $\sum_{(k,m)}\dfrac{(k+m)^n}{(k^2 + m^2)^n}$ converges absolutely $\implies P_{S^1 \times S^1}^n$ has a well-defined trace.\\  
\end{proof}

To the extent possible, the sum appearing in proposition \ref{torus P trace even} is evaluated in \cite{GMW}. For the boundary case $(n=2)$ it turns out that the sum does not converge. That is, the trace of the operator $P_{S^1 \times S^1}^2$ does not exist.  

\begin{prop}
    The operator $P^2_{S^1 \times S^1}$ is not trace class.
\end{prop}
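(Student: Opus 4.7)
The plan is to show that the trace-class defining sum diverges when computed with respect to the natural Fourier orthonormal basis $\set{\varphi_{(k,m)} := \frac{1}{2\pi} e^{ik\theta_1} e^{im\theta_2}}_{(k,m) \in \Z^2}$ of $L^2(S^1 \times S^1)$. First I would note that, since the eigenvalues $-\frac{(k+m)^2}{(k^2+m^2)^2}$ of $P^2_{S^1 \times S^1}$ on this basis are \emph{real}, the operator $P^2_{S^1 \times S^1}$ is self-adjoint, and hence $|P^2_{S^1 \times S^1}|$ is diagonal on the same basis with eigenvalues $\frac{(k+m)^2}{(k^2+m^2)^2}$. By the definition of trace class, it therefore suffices to prove
$$\sum_{(k,m) \in \Z^2 \setminus \set{(0,0)}} \frac{(k+m)^2}{(k^2+m^2)^2} = \infty.$$

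The core of the argument is a symmetry trick. Consider the bijection $\sigma: \Z^2 \setminus \set{(0,0)} \to \Z^2 \setminus \set{(0,0)}$ given by $\sigma(k,m) = (m,-k)$, a $90^\circ$ lattice rotation. It preserves $k^2+m^2$ and sends $k+m$ to $m-k$. All summands below are non-negative, so reindexing under $\sigma$ is legal, and combining this with the polarization-type identity $(k+m)^2 + (m-k)^2 = 2(k^2+m^2)$ yields
$$2 \sum_{(k,m)} \frac{(k+m)^2}{(k^2+m^2)^2} \; = \; \sum_{(k,m)} \frac{(k+m)^2 + (m-k)^2}{(k^2+m^2)^2} \; = \; 2 \sum_{(k,m)} \frac{1}{k^2+m^2}.$$
This reduces the problem to the well-known divergence of the lattice sum $\sum_{(k,m) \neq (0,0)} (k^2+m^2)^{-1}$.

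For the latter I would use a dyadic annulus argument. For $j \geq 0$, let $A_j := \set{(k,m) \in \Z^2 : 2^j \leq \max(|k|,|m|) < 2^{j+1}}$. The sets $A_j$ are pairwise disjoint and exhaust $\Z^2 \setminus \set{(0,0)}$. A direct count shows $|A_j| \geq c \cdot 2^{2j}$ for some $c > 0$ independent of $j$, while every $(k,m) \in A_j$ satisfies $k^2 + m^2 \leq 2 \cdot 2^{2(j+1)}$. Hence $\sum_{(k,m) \in A_j}(k^2+m^2)^{-1}$ is bounded below by a positive constant independent of $j$, and summing over $j$ gives divergence.

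There is no serious technical obstacle here once one spots the rotation $\sigma$. A naive attempt to compare $(k+m)^2/(k^2+m^2)^2$ pointwise to $c/(k^2+m^2)$ fails because the numerator vanishes along the anti-diagonal $k = -m$; the symmetry neatly averages those zeros against the large values on the diagonal $k = m$ and produces the clean equality $\sum \frac{(k+m)^2}{(k^2+m^2)^2} = \sum \frac{1}{k^2+m^2}$, after which the lattice-counting step is routine.
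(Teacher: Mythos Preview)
Your proof is correct and takes a genuinely different route from the paper's. The paper works directly with the original sum $\sum \frac{(k+m)^2}{(k^2+m^2)^2}$ and restricts to dyadic squares $\{2^j \le k,m \le 2^{j+1}-1\}$ in the positive quadrant, where $k$ and $m$ have the same sign so that $(k+m)^2$ is comparable to $k^2+m^2$; each such square then contributes a uniform positive amount, and summing over $j$ gives divergence. You instead exploit the $90^\circ$ lattice rotation $(k,m)\mapsto(m,-k)$ to average the numerator globally, obtaining the exact identity $\sum_{(k,m)\neq 0} \frac{(k+m)^2}{(k^2+m^2)^2} = \sum_{(k,m)\neq 0} \frac{1}{k^2+m^2}$, and then dispatch the latter by a standard dyadic annulus count. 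Your symmetry argument is cleaner: it sidesteps the need to hand-pick regions avoiding the anti-diagonal $k=-m$ (where any naive pointwise lower bound fails), and it reduces the question to a textbook lattice sum. The paper's approach is more self-contained in that it never leaves the original summand, but both arguments ultimately bottom out in the same kind of dyadic counting.
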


\begin{proof}
   Fix an ordering of the terms for the series $\sum_{(k,m) \in \Z^2 \setminus (0,0)} \dfrac{(k+m)^2}{(k^2+m^2)^2}$. Let 
   $$\sum_{n=1}^M a_{k_n,m_n}$$
   denote the $M^{th}$ partial sum of our series with respect to this ordering. That is, 
   $$a_{k_n,m_n} = \dfrac{(k_n + m_n)^2}{(k_n^2 + m_n^2)^2}.$$
   Now let $N > 0$. Without loss of generality, we may take $N \in \N$ as our goal is to show $\sum_{n=1}^M a_{k_n,m_n} \geq N$ for some $M$. Choose $M$ large enough such that, for all $(k,m) \in \Z^2 \setminus (0,0)$ with $2 \leq k,m \leq 2^{N+1}-1$, there exists an $n \leq M$ such that $k_n = k$ and $m_n = m$. That is, we are choosing $M$ sufficiently large such that all of the terms in our sum satisfying $2 \leq k,m \leq 2^{N+1}-1$ have been summed via the $M^{th}$ partial sum. Since the intervals $[2^j,2^{j+1}-1]$ for $j \in \set{1,...,N}$ are mutually disjoint (in particular, the sets formed by taking only the integer values from these intervals are mutually disjoint) it follows that 
   $$\sum_{n=1}^M a_{k_n,m_n} \geq \sum_{j=1}^N \: \sum_{k,m = 2^j}^{2^{j+1}-1} \dfrac{(k+m)^2}{(k^2 + m^2)^2}$$
    since each of the terms in the sum on the right hand side satisfy the condition in which $M$ was chosen, and we are not double counting any terms due to the disjoint condition just mentioned. Note that 
    \begin{equation*}
    \begin{split}
     \sum_{j=1}^N \: \sum_{k,m = 2^j}^{2^{j+1}-1} \dfrac{(k+m)^2}{(k^2 + m^2)^2} & \geq \sum_{j=1}^N \sum_{k,m = 2^j}^{2^{j+1}-1}\dfrac{(2^j + 2^j)^2}{(2^{2j} + 2^{2j})^2}\\
     & = \sum_{j=1}^N \sum_{k,m = 2^j}^{2^{j+1}-1}\dfrac{2^{2j + 2}}{2^{4j + 2}} \\
     & = \sum_{j=1}^N \sum_{k,m = 2^j}^{2^{j+1}-1} \dfrac{1}{2^{2j}}.
    \end{split}
    \end{equation*}
    Since $\sum_{k,m = 2^j}^{2^{j+1}-1}\dfrac{1}{2^{2j}}$ does not depend on $k$ or $m$, it is equal to some multiple of the summand $\dfrac{1}{2^{2j}}$. The lower and upper bounds on the sum were specifically chosen such that this multiple would be $2^{2j}$. Hence,
    \begin{equation*}
    \begin{split}
     \sum_{n=1}^M a_{k_n,m_n} & \geq \sum_{j=1}^N  \sum_{k,m = 2^j}^{2^{j+1}-1}\dfrac{1}{2^{2j}}\\
      & = \sum_{j=1}^N 2^{2j} \cdot \dfrac{1}{2^{2j}}\\
      & = \sum_{j=1}^N 1\\
      & = N.
    \end{split}
    \end{equation*}
    Since $N > 0$ was arbitrary, $\sum_{n=1}^\infty a_{k_n, m_n} = \infty$, so 
    $$\sum_{(k,m) \in \Z^2 \setminus(0,0)} \dfrac{(k+m)^2}{(k^2+m^2)^2} = \infty.$$
\end{proof}

\textbf{Acknowledgments} This article derives from research partially funded by the Undergraduate Scholars Program at Montana State University. Also, I would like to thank Dr. Grady for his advisory on the project for which the aforementioned research took place.

\end{document}